\documentclass[12pt,a4paper]{article}
\pdfoutput=1
\usepackage[top=2cm, bottom=2cm, left=2cm, right=2cm]{geometry}
\usepackage{amsmath,amssymb,amsthm,mathtools}
\usepackage{natbib}
\usepackage{todonotes}
\usepackage{hyperref,url}
\usepackage{enumerate}
\usepackage{algorithm2e}
\hypersetup{
	colorlinks=true,
  linkcolor=red,          
  citecolor=blue,         
  filecolor=magenta,      
  urlcolor=cyan           
}
\setlength{\marginparwidth}{2cm}
\counterwithout{equation}{section}

\newtheorem{theorem}{Theorem}[section]
\newtheorem{lemma}[theorem]{Lemma}
\newtheorem{proposition}[theorem]{Proposition}
\newtheorem{corollary}[theorem]{Corollary}

\theoremstyle{definition}

\newtheorem*{theorem*}{Theorem}
\newtheorem{definition}[theorem]{Definition}

\newtheorem{remark}[theorem]{Remark}

\newcommand{\pred}[1]{\boldsymbol{1}[#1]}


\newcommand{\set}[1]{\left\{ #1 \right\}}

\newcommand{\R}{\mathbb{R}}

\newcommand{\N}{\mathbb{N}}

\newcommand{\PP}{\mathbb{P}}
\newcommand{\pistar}{\pi_\star}

\newcommand{\Mirr}{\mathcal{M}_{\textnormal{irr}}}
\newcommand{\Mirrrev}{\mathcal{M}_{\textnormal{irr,rev}}}
\newcommand{\Mergrev}{\mathcal{M}_{\textnormal{erg,rev}}}

\newcommand{\dHel}{d_\textnormal{Hel}}
\newcommand{\dTV}{d_\textnormal{TV}}
\newcommand{\hitT}{\textnormal{HitT}}
\newcommand{\Cov}{\textnormal{Cov}}
\newcommand{\Distribution}{\textnormal{Distribution}}
\newcommand{\Distance}{\textnormal{Distance}}

\usepackage{xcolor}

\newcommand{\tblue}[1]{\textcolor{blue}{#1}}
\newcommand{\tred}[1]{\textcolor{red}{#1}}

\newcommand{\GW}[1]{\tred{\bf [GW: #1]}}
\newcommand{\SF}[1]{\tblue{\bf [SF: #1]}}


\DeclareMathOperator{\diag}{diag}


\DeclareMathOperator*{\argmin}{arg\,min}

\newcommand{\abs}[1]{\left| #1 \right|}

\newcommand{\nrm}[1]{\left\Vert #1 \right\Vert}


\newcommand{\PR}[2][]{\mathbb{P}_{#1}\left( #2 \right)}
\newcommand{\E}[2][]{\mathbb{E}_{#1}\left[ #2 \right]}

\newcommand{\eps}{\varepsilon}

\newcommand{\bigO}{\mathcal{O}}


\newcommand{\tmix}{t_{\mathsf{mix}}}
\newcommand{\trel}{t_{\mathsf{rel}}}

\usepackage{authblk}
\author[1]{Sela Fried}
\author[2]{Geoffrey Wolfer}
\affil[1]{Ben Gurion University of the Negev}
\affil[2]{JSPS International Research Fellow \protect\\  Department of Computer and Information Sciences \protect\\  Tokyo University of Agriculture and Technology }

\title{Identity Testing of Reversible Markov Chains}
\date{} 

\begin{document}

\maketitle

\begin{abstract}
We consider the problem of identity testing of Markov chain transition matrices based on a single trajectory of observations under the distance notion introduced by \citet{daskalakis2018testing} and further analyzed by \citet{pmlr-v99-cherapanamjeri19a}. Both works made the restrictive assumption that the Markov chains under consideration are symmetric. In this work we relax the symmetry assumption and show that it is possible to perform identity testing under the much weaker assumption of reversibility, provided that the stationary distributions of the reference and of the unknown Markov chains are close under a distance notion related to the separation distance. Additionally, we provide intuition on the distance notion of \citet{daskalakis2018testing} by showing how it behaves under several natural operations. In particular, we address some of their open questions.


\end{abstract}

\section{INTRODUCTION}

Efficiently distinguishing whether an unknown stochastic process is identical to a reference one or at least $\eps$-far from it under some notion of distance is a fundamental problem of the field of \emph{property testing}. Although the sample complexity in the iid case under the total variation distance is known to be of $\Theta(\sqrt{d}/\eps^2)$, where $d$ is the support size (see \citet{waggoner2015lp} for a summary), the Markovian case remains far from being settled.
In this setting one has the transition matrix of some reference Markov chain and needs to decide, with prescribed confidence, based on a single long trajectory sampled from an unknown Markov chain which started from an arbitrary state, whether the trajectory was sampled from the reference Markov chain or from a Markov chain that is at least $\eps$-far from it, with respect to some notion of distance.

Identity testing in the Markovian setting was applied by \citet{Daskalakis2017TestingFO} on the problem of testing whether an observed card shuffling is performed according to a certain riffle shuffle model. More recently, \citet{matsui2022analysis} used identity testing of Markov chains to analyse COVID-19 evolution.

\section{RELATED WORK AND MOTIVATION}

Prior work on the problem of identity testing of Markov chains within the property testing framework has so far focused on two main notions of distance: On one hand, \citet{daskalakis2018testing}, motivated by \citet{kazakos1978bhattacharyya}, considered a distance notion defined as the spectral radius of the entry-wise geometric mean of the transition matrices. Restricting their analysis to symmetric Markov chains, they obtained an upper bound of $\tilde\bigO(d / \eps + h )$ on the sample complexity where $d$ is the state space size, $\eps$ is the proximity parameter of the test, $h$ depends on the hitting time of the reference Markov chain  
and the tilde notation hides polylogarithmic factors in all quantities being used. They also proved a lower bound of $\Omega(d / \eps)$ and conjectured that this is the true sample complexity. Subsequently and still under the symmetry assumption,
\citet{pmlr-v99-cherapanamjeri19a} used sparsest-cut techniques 
to obtain an upper bound of $\tilde\bigO(d / \eps^4 )$ proving thereby that the sample complexity is independent of the hitting time. 

In parallel, 
\citet{pmlr-v108-wolfer20a} considered a distance  between Markov chains that relies on the infinity norm over stochastic matrices. They showed that, under this distance, the broader class of ergodic Markov chains may be addressed and proved an upper bound on the sample complexity of $\tilde{\Theta}((\sqrt{d}+\trel)/\pistar)$ where $\trel$ and $\pi_\star$ are, respectively, the relaxation time and the minimum stationary probability of the reference Markov chain.
Subsequently, \citet{chan2021learning} characterized the sample complexity of the problem in terms of what they refer to as the $k$-cover time which enabled them to generalize to all irreducible Markov chains. At the same time, \citet{fried2021alpha} too extended the results of \citet{pmlr-v108-wolfer20a} to irreducible Markov chains by moving to the $\alpha$-lazy versions of the Markov chains under consideration.



\section{MAIN CONTRIBUTION}
\label{section:main-contribution}

In this work we relax the symmetry assumption of \citet{daskalakis2018testing} and \citet{pmlr-v99-cherapanamjeri19a}, which poses a considerable limitation in terms of applicability. In the words of \citet[p. 3]{chan2021learning}, ``However, identity testing under this distance only works for symmetric Markov chains, which is a quite restricted sub-family of Markov chains [\dots]. Thus we do not study learning and testing problems under this distance."

Our main insight is that the methods of the aforementioned works are not limited to symmetric Markov chains and that a finer analysis 
allows to perform identity testing of Markov chains belonging to the much larger and natural class of reversible Markov chains. This generalization significantly enhances the applicability of the distance notion since the class of reversible Markov chains corresponds to random walks on networks (e.g. \citet[Section 9.1]{levin2017markov}).

A consequence of the symmetry assumption of \citet{daskalakis2018testing} and \citet{pmlr-v99-cherapanamjeri19a} is that the stationary distributions of the Markov chains under consideration are uniform. In particular, despite the uncertainty regarding the unknown Markov chain, we do know that its stationary distribution is equal to the stationary distribution of the reference Markov chain. In the generalization to reversible Markov chains this property is lost and that poses a challenge. Clearly, one possibility is to assume that the reference and the unknown Markov chains share the same (but now arbitrary) stationary distribution. 
However, this assumption can be difficult to verify empirically since stationary distributions could be arbitrarily close while being different. We show that with a more careful analysis and only at a constant price in terms of the sample complexity, it is possible to perform identity testing on reversible Markov chains whose stationary distributions are not too far apart from each other. 

Our main result is stated in the following theorem. It uses the distance notion between Markov chains of \citet{daskalakis2018testing} (cf. Definition \ref{definition:distance}) and of a distance notion between probability distributions that is closely related to the separation distance (cf. Definition \ref{def; sep}):  


\begin{theorem}
\label{theorem:upperbound-reversible}
Suppose we have the transition matrix $\bar{P}$ of an irreducible and reversible Markov chain with minimum stationary probability $\bar{\pi}_\star$ and let $\eps\in(0,1)$. There is a polynomial time algorithm which given access to a single trajectory of length $\tilde\bigO \left(1/\eps^4\bar{\pi}_\star\right)$ from an unknown irreducible and reversible Markov chain $P$ with a stationary distribution $\pi$ that satisfies $$\left|\left|\frac{\pi}{\bar{\pi}}-\mathbf{1}\right|\right|_{\infty}\leq\frac{\eps}{2},$$ distinguishes between $$P=\bar{P}\;\;\;\textnormal{ and }\;\;\; \Distance(P,\bar{P})\geq\eps,$$ with probability of success at least $3/5$.

\end{theorem}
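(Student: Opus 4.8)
The plan is to lift the sparsest-cut test of \citet{pmlr-v99-cherapanamjeri19a} from symmetric to reversible chains, with the hypothesis $\nrm{\pi/\bar\pi-\mathbf 1}_\infty\le\eps/2$ taking over the role that the uniform-stationary-distribution property played in their setting. Write $\bar Q(i,j)=\bar\pi_i\bar P(i,j)$ and $Q(i,j)=\pi_iP(i,j)$ for the edge measures; these are symmetric by reversibility, $\bar Q$ is known exactly, and the conductance-type quantities the test will compare, such as $Q(S,S^c)/\min(\pi(S),\pi(S^c))$ and its reference analogue, depend on $P$ only through $Q$. So it suffices to reason throughout about $Q$ versus $\bar Q$, and about the entrywise geometric mean $\sqrt{P\circ\bar P}$ whose spectral radius defines $\Distance$.

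\emph{Estimation.} From a trajectory of length $n=\tilde\bigO(1/\eps^4\bar\pi_\star)$ I form $\widehat Q(i,j)=\tfrac1n\,\#\{t<n:X_t=i,\ X_{t+1}=j\}$ and the induced $\widehat\pi,\widehat P$. Because $\pi_i\ge(1-\eps/2)\bar\pi_i\ge\tfrac12\bar\pi_\star$ by hypothesis, the ergodic theorem together with standard concentration for irreducible reversible chains shows that every state is visited $\tilde\Omega(1/\eps^4)$ times and that $\widehat Q$ attains the accuracy needed below; this is the only place $\bar\pi_\star$ enters $n$, and the relaxation time of $\bar P$ enters only logarithmically, exactly as in the symmetric analyses.

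\emph{Soundness, the crux.} The argument of \citet{pmlr-v99-cherapanamjeri19a} relates $1-\rho(\sqrt{P\circ\bar P})$ to a sparsest-cut-type discrepancy between the edge measures through a Cheeger-type inequality, and the task is to make this run when $\pi\ne\bar\pi$. The obstruction is that $\sqrt{P\circ\bar P}$ admits \emph{no} exact symmetrization in that case, since $P$ and $\bar P$ are self-adjoint for different inner products. The resolution: conjugating by $\diag(\bar\pi)^{1/2}$ turns the $(i,j)$ entry into $\sqrt{\bar\pi_i/\bar\pi_j}\,\sqrt{P(i,j)\bar P(i,j)}$, and using reversibility of both chains one checks that the only obstruction to symmetry of this matrix is the entrywise factor $\sqrt{\bar\pi_i\pi_j/(\bar\pi_j\pi_i)}$, which $\nrm{\pi/\bar\pi-\mathbf 1}_\infty\le\eps/2$ pins to $1\pm O(\eps)$. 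Thus, after a spectrum-preserving conjugation, $\sqrt{P\circ\bar P}$ is symmetric up to an entrywise multiplicative $1\pm O(\eps)$, so $\Distance(P,\bar P)\ge\eps$ --- which, given the hypothesis on $\pi/\bar\pi$, means $\rho(\sqrt{P\circ\bar P})\le 1-\Omega(\eps)$ --- transfers the same spectral deficit to the nearby symmetric matrix. Feeding this into the Cheeger rounding of \citet{pmlr-v99-cherapanamjeri19a}, with the normalization taken to be $\min(\bar\pi(S),\bar\pi(S^c))$ rather than $\min(|S|,|S^c|)/d$ so that non-uniformity is harmless, yields a cut $S$ on which $Q$ and $\bar Q$ disagree by $\Omega(\eps^2)$. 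Cheeger's square loss, turning the $\Omega(\eps)$ spectral deficit into an $\Omega(\eps^2)$ conductance gap, is what ultimately forces the $1/\eps^4$ in $n$.

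\emph{Completeness and the algorithm.} If $P=\bar P$ then $\sqrt{P\circ\bar P}=\bar P$, $\Distance(P,\bar P)=0$, and $\widehat Q\approx\bar Q$, so no cut exhibits a discrepancy. The algorithm therefore computes a test statistic that (approximately) maximizes over $S$ the cut-discrepancy functional of the structural lemma evaluated on $\widehat Q$; since the exact maximization is \textsf{NP}-hard, it instead solves the standard semidefinite relaxation of sparsest cut, which runs in polynomial time and whose $O(\sqrt{\log d})$ integrality gap costs only a polylogarithmic factor absorbed into $\tilde\bigO$. Comparing the value to a threshold calibrated from $\bar P$ separates the cases: for $P=\bar P$ the statistic is small with probability at least $3/5$ by concentration of $\widehat Q$, and for $\Distance(P,\bar P)\ge\eps$ the cut delivered by the structural lemma certifies that it is large. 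The main obstacle is precisely this structural lemma: making the step ``$\sqrt{P\circ\bar P}$ is nearly symmetric'' quantitative purely in terms of $\nrm{\pi/\bar\pi-\mathbf 1}_\infty$, and then verifying that the resulting $O(\eps)$ perturbation does not propagate through the Cheeger rounding so as to erase the $\Omega(\eps^2)$ cut gap. Everything else follows the template of \citet{pmlr-v99-cherapanamjeri19a} with only cosmetic changes.
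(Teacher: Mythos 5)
There is a genuine gap, and it sits at the very first step of your plan. You assume that a single trajectory of length $\tilde\bigO\!\left(1/\eps^4\bar{\pi}_\star\right)$ lets you form a globally accurate empirical edge measure $\widehat Q$ (every state visited $\tilde\Omega(1/\eps^4)$ times), with the relaxation time entering ``only logarithmically.'' That is false in this setting: $\bar P$ is only assumed irreducible and reversible, so its relaxation and hitting times can be arbitrarily large compared to $d$, $1/\eps$ and $1/\bar{\pi}_\star$, and the same is true of the unknown $P$. Started from an arbitrary state, the trajectory may stay inside one bottlenecked piece of the state space for the entire budget, so $\widehat Q$ is simply undefined (or wildly wrong) on the rest of the space even when $P=\bar P$. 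Your completeness argument (``the statistic is small by concentration of $\widehat Q$'') therefore fails, and it cannot be repaired without letting the sample size depend on the mixing time of $\bar P$, which would destroy the claimed bound. The paper's proof is architected precisely to avoid global estimation: it partitions the \emph{known} reference chain into well-connected components (the sparsest-cut/Bourgain-embedding machinery acts on $\bar P$ only, needing no samples), it accepts only if it manages to harvest enough iid samples of $\Distribution(S,P,\bar\pi)$ inside some component --- fast intra-component mixing being guaranteed in the $P=\bar P$ case by Cheeger plus Paulin's spectral-gap concentration --- and it \emph{rejects by default} otherwise, which is automatically correct in the far case. This one-sided, default-reject structure is exactly what removes any dependence on the unknown (and possibly unbounded) mixing behaviour; your symmetric ``estimate $\widehat Q$, then run a sparsest-cut statistic on it'' design has no such mechanism.

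A secondary issue: your soundness route (round the spectral deficit of a near-symmetrization of $\sqrt{P\circ\bar P}$ into an $\Omega(\eps^2)$ cut discrepancy between $Q$ and $\bar Q$ via Cheeger) is not what the paper does and is itself not justified as stated --- a small top eigenvalue of the geometric-mean matrix need not localize the disagreement on a single cut; it can be spread over many entries inside a well-connected set. The paper instead plugs the indicator of a reference component $S$ (with internal reference mass at least $1-\eps/16$) into the Courant--Fischer quotient for $\sqrt{P\circ\bar P}$, self-adjoint in $L_2\!\left(\sqrt{\pi\circ\bar\pi}\right)$, and uses the hypothesis $\left|\left|\pi/\bar\pi-\mathbf{1}\right|\right|_\infty\le\eps/2$ plus AM--GM to conclude directly that $\dHel^2\!\left(\Distribution(S,P,\bar\pi),\Distribution(S,\bar P,\bar\pi)\right)\ge\eps^2/128$, after which detection is delegated to a black-box iid Hellinger tester on samples generated within $S$ --- no empirical cut optimization over the unknown chain is ever performed.
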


\begin{remark}
We make the following observations:
\begin{enumerate}
    \item When the Markov chains are symmetric, $\bar{\pi}_\star=1/d$ and our upper bound of $\tilde\bigO \left(1/\eps^4\bar{\pi}_\star\right)$ takes the form of $\tilde\bigO \left(d/\eps^4\right)$, restoring the result of \citet[Theorem~10]{pmlr-v99-cherapanamjeri19a}. Thus, our generalization comes with no additional cost in terms of the sample complexity.
    \item Similar to the symmetric case, the upper bound of $\tilde\bigO \left(1/\eps^4\bar{\pi}_\star\right)$ involves only efficiently computable parameters of the  reference Markov chain and, in particular, does not depend on the unknown Markov chain.
    \item Having an upper bound on the mixing time of the unknown Markov chain significantly simplifies the problem. Indeed, suppose the mixing times of both the reference as well as the unknown Markov chain are upper bounded by $T$. By thinning the process and keeping pairs of observations $(X_t, X_{t+1})$ separated roughly by  $\tilde{\bigO}(T)$ steps, a trajectory of length $\tilde{\bigO}(T d / \eps)$ yields $\tilde{\bigO}(d / \eps)$ almost independent pairs. This suffices to perform identity testing on the edge measures with respect to the Hellinger distance which dominates the one in Definition~\ref{definition:distance}, allowing to solve the original problem by reduction. In particular, under this assumption, the requirement that the stationary distributions are close is redundant. Additionally, this shows that the dependency on $\pi_\star$ in our bound is not strictly necessary in this case. This work is concerned with the more challenging setting where the mixing time of the unknown Markov chain is unknown, precluding this approach.
    \item Both
    \citet{daskalakis2018testing} and \citet{ pmlr-v99-cherapanamjeri19a} make use of the hitting time to bound the amount of time needed to visit every state ``enough" times, which is, in the symmetric case, the same for all states. In contrast, in the reversible case, ``enough" depends on the weight of each state in the stationary distribution and it is not clear how the hitting time could be used in this case. We avoid the use of the hitting time by applying the concentration bounds of \citet{paulin2015concentration}.
    \item The algorithm of \citet{pmlr-v99-cherapanamjeri19a} has a polynomial time complexity and this is unchanged in our generalization.
\end{enumerate}

\end{remark}

\section{PRELIMINARIES}\label{sec; 1}
For $d\in\N$ we denote $[d]=\{1,2,\ldots,d\}$ and write $\Delta_d$ for the set of all probability distributions over $[d]$. 
Vectors will be written as row-vectors and for $x\in\R^d$ and $i\in[d]$ we write $x(i)$ for the $i$th entry of $x$. Similarly, if $P$ is a square matrix of size $d$ and $i,j\in[d]$, we write $P(i,j)$ for the entry at the $i$th row and the $j$th column of $P$. We refer to transition matrices of Markov chains simply as Markov chains.

\paragraph{Irreducible and reversible Markov chains.}

Denote by $\Mirr$ (resp. $\Mirrrev, \Mergrev$) the set of all irreducible (resp. irreducible and reversible, ergodic and reversible) Markov chains on the state space $[d]$. Let $P\in\Mirr,\mu\in\Delta_d, m\in\N$ and $i_1,\ldots,i_m\in[d]$. By $\left(X_t\right)_{t\in[m]}\sim(P,\mu)$ we mean  $$\PP((X_1,\ldots,X_m)=(i_1,\ldots,i_m))=\mu(i_1)\prod_{t=1}^{m-1}P(i_t,i_{t+1}).$$
For an entry-wise positive vector $\pi\in\R^d$ let $\langle\cdot,\cdot\rangle_\pi$ be the inner product on $\R^d$  given by $$\langle x,y\rangle_\pi =\sum_{i\in[d]}x(i)y(i)\pi(i),\;\;\forall x,y\in\R^d$$ (cf. \citep[p. 153]{levin2017markov}) and let $||\cdot||_{2,\pi}$ be the induced norm. We write $L_2(\pi) := \left(\R^d, \langle\cdot,\cdot\rangle_\pi\right)$ for the corresponding Hilbert space. For $P\in\Mirr$ with stationary probability $\pi$, the \emph{edge measure} $Q$ is defined by $Q=\diag(\pi)P$, where $\diag(\pi)$ is the diagonal matrix whose entries correspond to $\pi$ (cf. \citet[p. 88]{levin2017markov}). Define the \emph{time reversal of} $P$ by $P^* = \diag(\pi)^{-1}P^T\diag(\pi)$ (e.g. \citet[1.33]{levin2017markov}) and the \emph{multiplicative reversibilization of} $P$ by $P^\dagger = P^*P$ (e.g. \citet[2.2]{fill1991eigenvalue}). Finally, the \emph{spectral gap} $\gamma$ of $P\in\Mirrrev$ is defined by $\gamma=1-\lambda_2$ where $\lambda_2$ is the second largest eigenvalue of $P$ (cf. \citet[p. 154]{levin2017markov}).

\paragraph{Censored Markov chains.} 

Let $(X_t)_{t\in\N}$ be a Markov chain with transition matrix $P$ and let $\emptyset\neq S\subseteq [d]$. Consider the stochastic process $(X_{\tau_t})_{t\in\N}$ defined as follows: $\tau_1 = \inf\{i\in\N\;|\;X_i\in S\}$ and $\tau_{t+1} = \inf\{i\in\N \;|\; i > \tau_t, X_i \in S\}$ for every $t\in\N$. It is well known that $(X_{\tau_t})_{t\in\N}$ is a Markov chain. It is called the \emph{chain induced on $S$} \citep[Example~13.19]{levin2017markov} or the \emph{watched chain on $S$} \citep{levy1951,levy1952,levy1958} and we shall denote its transition matrix by $P_{\textnormal{cen}}(S)$. There is an explicit description of $P_{\textnormal{cen}}(S)$  in terms of certain submatrices of $P$ that uses the following notation: Let $R, T\subseteq [d]$. We write $P_{R,T}$ for the matrix obtained from $P$ by keeping only the rows and columns with indices in $R$ and $T$, respectively. If $R=T$ we write $P_R$ instead of $P_{R,R}$. With this notation it holds $$P_{\textnormal{cen}}(S) = P_S+ \sum_{t=1}^\infty P_{S,[d]\setminus S}P^t_{[d]\setminus  S}P_{[d]\setminus S,S}$$ (e.g. \citet[Lemma 6-6]{kemeny2012denumerable}). In addition, $P_{\textnormal{cen}}(S)$  is irreducible (resp. ergodic) if $P$ is irreducible (resp.  ergodic) and has the stationary distribution $\pi_S$ given by $$\pi_S(i)=\frac{\pi(i)}{\pi(S)},\;\;\forall i\in S$$ where $\pi(S)=\sum_{i\in S}\pi(i)$ (e.g. \citep[Lemma 2]{zhao1996censored}).
The following lemma generalizes to reversible Markov chains the statement in
 \citet[Lemma 14]{pmlr-v99-cherapanamjeri19a} according to which  $P_{\textnormal{cen}}(S)$ is symmetric if $P$ is symmetric. Its proof is given in Appendix~\ref{app; 1}.

\begin{lemma}\label{lem; 225}
Let $P\in\Mirrrev$ and let $S\subseteq [d]$. Then $P_{\textnormal{cen}}(S)\in\Mirrrev$.
\end{lemma}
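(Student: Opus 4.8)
The plan is to establish the reversibility of $P_{\textnormal{cen}}(S)$, since its irreducibility and the form of its stationary distribution $\pi_S(i)=\pi(i)/\pi(S)$, $i\in S$, are already recorded in the preliminaries. Because $\pi(S)$ is a common factor, detailed balance $\pi_S(i)P_{\textnormal{cen}}(S)(i,j)=\pi_S(j)P_{\textnormal{cen}}(S)(j,i)$ for all $i,j\in S$ is equivalent to the assertion that $D_S P_{\textnormal{cen}}(S)$ is a symmetric matrix, where $D_S$ denotes the diagonal matrix with entries $\pi(i)$, $i\in S$. So the entire content of the lemma reduces to verifying this symmetry.

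I would start from the explicit series $P_{\textnormal{cen}}(S)=P_S+\sum_{t\ge 1}P_{S,S^c}P_{S^c}^{t}P_{S^c,S}$ with $S^c:=[d]\setminus S$. (If $S^c=\emptyset$ there is nothing to prove; otherwise irreducibility of $P$ forces the spectral radius of $P_{S^c}$ to be strictly less than $1$, so the series converges absolutely and transposition may be performed term by term.) Reversibility of $P$, i.e.\ symmetry of $\diag(\pi)P$, descends to the blocks: $D_S P_S$ and $D_{S^c}P_{S^c}$ are symmetric, $D_S P_{S,S^c}=(P_{S^c,S})^{T}D_{S^c}$, and $D_{S^c}P_{S^c,S}=(P_{S,S^c})^{T}D_S$; moreover, from $D_{S^c}P_{S^c}=(P_{S^c})^{T}D_{S^c}$ a one-line induction gives $D_{S^c}P_{S^c}^{t}=(P_{S^c}^{t})^{T}D_{S^c}$ for every $t$.

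Feeding these identities into a generic summand,
\begin{align*}
D_S P_{S,S^c}P_{S^c}^{t}P_{S^c,S}
&=(P_{S^c,S})^{T}D_{S^c}P_{S^c}^{t}P_{S^c,S}\\
&=(P_{S^c,S})^{T}(P_{S^c}^{t})^{T}D_{S^c}P_{S^c,S}\\
&=(P_{S^c,S})^{T}(P_{S^c}^{t})^{T}(P_{S,S^c})^{T}D_S,
\end{align*}
which is precisely the transpose of $D_S P_{S,S^c}P_{S^c}^{t}P_{S^c,S}$; hence each summand is symmetric. (Equivalently, writing $A:=D_{S^c}^{1/2}P_{S^c}D_{S^c}^{-1/2}$, which is symmetric, and $B:=D_S^{1/2}P_{S,S^c}D_{S^c}^{-1/2}$, for which $D_{S^c}^{1/2}P_{S^c,S}D_S^{-1/2}=B^{T}$, the summand equals $D_S^{1/2}BA^{t}B^{T}D_S^{1/2}$, manifestly symmetric.) Since $D_S P_S$ is symmetric as well, $D_S P_{\textnormal{cen}}(S)$ is a convergent sum of symmetric matrices, hence symmetric, and the lemma follows. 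I expect the only delicate point to be the block-matrix bookkeeping together with the justification of termwise transposition of the infinite sum; everything else is a mechanical consequence of detailed balance for $P$.
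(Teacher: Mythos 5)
Your proof is correct and takes essentially the same route as the paper: both verify detailed balance for $P_{\textnormal{cen}}(S)$ term by term in the series $P_S+\sum_{t\ge 1}P_{S,[d]\setminus S}P_{[d]\setminus S}^{t}P_{[d]\setminus S,S}$, using detailed balance of $P$ restricted to the blocks together with an induction giving $\pi$-symmetry of the powers of $P_{[d]\setminus S}$ — you simply phrase the computation in block-matrix form where the paper reverses paths entry-wise. Your additional attention to convergence and termwise transposition is a minor (and harmless) refinement of the same argument.
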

\paragraph{Statistical distances.}

The Hellinger distance and the total variation distance between two distributions $p,q\in\Delta_d$ are  given by 
\begin{align}
\dHel^2(p,q) =&\frac{1}{2}\sum_{i\in[d]}\left(\sqrt{p(i)}-\sqrt{q(i)}\right)^2\nonumber \\ =& 1-\sum_{i\in[d]}\sqrt{p(i)q(i)}\nonumber\end{align}
and 

$$\dTV(p,q)=\frac{1}{2}\sum_{i\in[d]}|p(i)-q(i)|,$$ respectively.

It is well known (e.g. \citet[p. 13]{gibbs2002choosing}) that
\begin{equation}\label{eq; 46}
\dHel^2(p,q)\leq\dTV(p,q)\leq\sqrt{2}\dHel(p,q).
\end{equation}

For $P,\bar{P}\in\Mirr$ we denote by $P\circ\bar{P}$ the \emph{Hadamard product of $P$ and $\bar{P}$} (e.g. \citet[Definition 7.5.1]{horn2012matrix}) and by $\sqrt{P}$ the square matrix of size $d$ such that $$\sqrt{P}(i,j)=\sqrt{P(i,j)},\;\;\forall i,j\in[d].$$ 

For $p,q\in\Delta_d$, we define $p\circ q$ and $\sqrt{p}$ analogously.

The following distance between two Markov chains was proposed by \citet[p. 4]{daskalakis2018testing} who motivated it by noticing that (a) it vanishes if and only if the Markov chains share an identical essential communicating class and (b) it captures the ability to distinguish between the Markov chains based on a single long trajectory \citep[Claims 1 and 2]{daskalakis2018testing}. The distance relies on the \emph{spectral radius} $\rho(P)$ of a square matrix $P$ defined by $$\rho(P)=\max\{|\lambda|\;|\;\lambda \text{ is an eigenvalue of } P\}$$ (e.g. \citet{horn2012matrix}).

\begin{definition}
\label{definition:distance}
Let $P$ and $\bar{P}$ be two Markov chains. Define $$\Distance(P,\bar{P})=1-\rho\left(\sqrt{P\circ \bar{P}}\right).$$
\end{definition}

We will perform identity testing on reversible Markov chains whose stationary distributions are not too far from each other under the following distance notion, which is closely related to the \emph{separation distance} (e.g. \citet[6.7]{levin2017markov} or \citet[p. 72]{aldous1987strong}):

\begin{definition}\label{def; sep}
Let $\mu,\nu\in\Delta_d$ such that $\nu$ is entry-wise positive. Denote by $\mathbf{1}$ the vector in $\mathbb{R}^d$ that has all entries equal to $1$. Define
$$\left|\left|\frac{\mu}{\nu}-\mathbf{1}\right|\right|_{\infty} := \max_{i\in[d]}\left|\frac{\mu(i)}{\nu(i)}-1\right|.$$
\end{definition}


\section{STATE SPACE PARTITIONING AND COMPONENT ANALYSIS}
\label{section:space-partition}

The main insight of \citet{{pmlr-v99-cherapanamjeri19a}} that enables them to improve upon \citet{daskalakis2018testing} and discard the dependency on the hitting time is that in order to distinguish between two different Markov chains it is sufficient to analyse trajectories in subsets of states that are internally well connected (see Corollary \ref{cor; 66} and the paragraph preceding it). To achieve this they devise a new algorithm for partitioning of the state space $[d]$. This algorithm, upon receiving a reference Markov chain and a tolerance parameter, returns a tuple $(\mathcal{S}, T)$ where $\mathcal{S}$ is a set of well connected subsets of $[d]$ (components) and $T$ is a subset of $[d]$ in which the Markov chain does not spend too much time. The subsets in $\mathcal{S}$, together with $T$, form a partition of $[d]$. The Markov chain identity testing is then reduced to iid identity testing of distributions induced by these components. In this section we introduce the machinery and prove its properties. 



\subsection{Spectral and probabilistic properties of the components}
\label{section:spectral-properties-components}



Recall that for $P\in \Mirrrev$ and $\nu\in\Delta_d$ the matrix $\diag(\nu)P$ encodes a distribution over $[d]^2$. If $\nu = \pi$, where $\pi$ is the stationary distribution of $P$, the joint distribution $\diag(\nu)P$ corresponds to the edge measure $Q$. To some component $S\subseteq [d]$ we wish to similarly associate a probability distribution over which we will later apply iid identity testing. To this end, we first consider $\diag(\nu_S)P_S$. Second, we merge all the outgoing transitions from $S$, i.e., all $(i,j)\in S\times [d]$ such that $j\notin S$, into a single symbol denoted by $\infty$. This is the idea behind the following

\begin{definition}
Let $P$ be a Markov chain and let $\nu\in\Delta_d$. For $S\subseteq [d]$ such that $\nu(S)>0$ we denote by $\Distribution(S, P, \nu)$ the probability distribution on the set $S^2\cup\{\infty\}$ defined as follows:
\begin{align}\Distribution(S, P,\nu)(i,j)=&\frac{\nu(i)P(i,j)}{\nu(S)},\;\;\forall i,j\in S \textnormal{ and}\nonumber\\\Distribution(S, P, \nu)(\infty) =& \nonumber\\ 1- \sum_{i,j\in S}& \Distribution(S, P, \nu)(i,j).\nonumber\end{align}
\end{definition}

Let $P,\bar{P}\in\Mirrrev$ with stationary distributions $\pi,\bar{\pi}$, respectively, where we assume that $\bar{P}$ is given and that $P$ is unknown. A key property is that a positive distance between $P$ and $\bar{P}$ results in a positive Hellinger distance between the probability distributions induced by the corresponding edge measures over a component $S$, provided that in $\bar{P}$ there is enough weight on the transitions between states in $S$ (cf. \citet[Lemma 13]{pmlr-v99-cherapanamjeri19a}). But, since in the reversible case the stationary distributions are, in general, not uniform, it is not clear how to sample from the unknown edge measure. We solve this problem by sampling from the joint distribution $R=\diag(\bar{\pi})P$. This approach is guaranteed to succeed if $\pi$ is not too far from $\bar{\pi}$. This is the content of the following lemma. A sketch of its proof is given here while the full proof is given in Appendix \ref{section:proof-min-max}. 

\begin{lemma}\label{lem; 537}
Let $P,\bar{P}\in\mathcal{M}_{\textnormal{irr,rev}}$ with stationary distributions $\pi,\bar{\pi}$, respectively. Let $\eps\in(0,1)$ be such that $\textnormal{Distance}(P, \bar{P})\geq\eps$ and $$\left|\left|\frac{\pi}{\bar{\pi}}-\mathbf{1}\right|\right|_{\infty}\leq\frac{\eps}{2}.$$ Let $S\subseteq [d]$ such that $$\sum_{i,j\in S} \textnormal{Distribution}(S, \bar{P},\bar{\pi})(i,j)\geq 1-\frac{\eps}{16}.$$ Then \begin{align}d^2_\textnormal{Hel}(\textnormal{Distribution}(S, P,\bar{\pi}), \textnormal{Distribution}(S, \bar{P}, \bar{\pi}))&\geq\nonumber \\ &\frac{\eps^2}{128}.\nonumber\end{align}
\end{lemma}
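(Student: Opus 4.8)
The plan is to reduce the claimed Hellinger lower bound between the two distributions $\Distribution(S,P,\bar\pi)$ and $\Distribution(S,\bar P,\bar\pi)$ to a statement about the spectral radius $\rho(\sqrt{P\circ\bar P})$ restricted to the block $S$, along the lines of \citet[Lemma 13]{pmlr-v99-cherapanamjeri19a}, but carrying the non-uniform stationary weights $\bar\pi$ through the argument. First I would recall the elementary identity $\dHel^2(p,q) = 1 - \sum_k \sqrt{p(k)q(k)}$ and apply it to the two distributions on $S^2\cup\{\infty\}$: writing $m := \sum_{i,j\in S}\Distribution(S,\bar P,\bar\pi)(i,j) \ge 1 - \eps/16$ for the "interior mass" under the reference, and $m' := \sum_{i,j\in S}\Distribution(S,P,\bar\pi)(i,j)$ for the one under the unknown chain, the Bhattacharyya overlap splits as
\begin{equation}
\sum_{i,j\in S}\sqrt{\tfrac{\bar\pi(i)P(i,j)}{\bar\pi(S)}\cdot\tfrac{\bar\pi(i)\bar P(i,j)}{\bar\pi(S)}} \;+\; \sqrt{(1-m)(1-m')} \;=\; \frac{1}{\bar\pi(S)}\sum_{i,j\in S}\bar\pi(i)\sqrt{P(i,j)\bar P(i,j)} \;+\; \sqrt{(1-m)(1-m')}.\nonumber
\end{equation}
The first term is exactly a quadratic-form/Rayleigh expression for the nonnegative matrix $\bigl(\diag(\bar\pi_S)\sqrt{P_S\circ\bar P_S}\bigr)$ tested against the all-ones vector, normalized by $\bar\pi(S)$; I would bound it from above by (a constant times) the spectral radius of the $S$-block of $\sqrt{P\circ\bar P}$, after a $\diag(\bar\pi_S)^{1/2}$ similarity transformation that symmetrizes the block when $P,\bar P$ are reversible (here Lemma~\ref{lem; 225} and reversibility of the censored/sub-chains is the structural input that makes the relevant matrix self-adjoint in an $L_2(\bar\pi_S)$ sense, so that its spectral radius equals the operator norm and is controlled by the Rayleigh quotient).

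The next step is to connect $\rho$ of the $S$-block back to $\rho(\sqrt{P\circ\bar P})$ on all of $[d]$, which by hypothesis satisfies $\rho(\sqrt{P\circ\bar P}) \le 1-\eps$. Since $\sqrt{P\circ\bar P}$ is entrywise nonnegative, any principal submatrix has spectral radius no larger than that of the whole matrix (Perron–Frobenius monotonicity), so $\rho\bigl((\sqrt{P\circ\bar P})_S\bigr) \le 1-\eps$. Feeding this into the bound on the first overlap term gives something like $\frac{1}{\bar\pi(S)}\sum_{i,j\in S}\bar\pi(i)\sqrt{P(i,j)\bar P(i,j)} \le (1-\eps)\cdot\frac{\bar\pi_P(S)}{\bar\pi(S)}$ up to the normalization bookkeeping, where the mismatch between using $\bar\pi$ as the sampling weight and $\pi$ as the true stationary weight of $P$ is exactly where the hypothesis $\|\pi/\bar\pi - \mathbf 1\|_\infty \le \eps/2$ enters: it lets me replace $\pi$-weighted row sums by $\bar\pi$-weighted ones at a multiplicative cost in $[1-\eps/2, 1+\eps/2]$, so that the "defect" $1-\rho$ of the geometric-mean matrix survives the substitution up to losing a constant fraction. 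For the second term I would use $\sqrt{(1-m)(1-m')} \le \sqrt{1-m} + \text{(small)}$, or more carefully bound $1-m' $ using that $m'$ cannot be much smaller than $m$ after the $\bar\pi$-vs-$\pi$ correction, so that $\sqrt{(1-m)(1-m')} \le \sqrt{\eps/16}\cdot(\text{something }O(1))$, which is $O(\eps^{1/2})$ and hence, after everything is combined, still leaves a clean $\Omega(\eps^2)$ gap.

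Assembling: $\dHel^2 = 1 - (\text{first term}) - (\text{second term}) \ge 1 - (1-\eps)(1+O(\eps)) - O(\sqrt\eps\cdot\text{mass defect})$; the delicate point is that the interior-mass hypothesis is stated with slack $\eps/16$ precisely so the $O(\sqrt{\eps/16})$ contribution is dominated once one tracks constants, yielding $\dHel^2 \ge \eps^2/128$. I expect the main obstacle to be the bookkeeping around the two different normalizations — dividing by $\bar\pi(S)$ in $\Distribution(S,P,\bar\pi)$ versus the natural $\pi(S)$ normalization of the unknown chain's edge measure — and ensuring that after symmetrizing the $S$-block via $\diag(\bar\pi_S)^{1/2}$ the spectral radius one extracts is genuinely that of a principal submatrix of $\sqrt{P\circ\bar P}$ and not of some perturbed matrix; making the "$\bar\pi \approx \pi$ up to $1\pm\eps/2$" substitution rigorous at the level of the quadratic form, rather than just entrywise, is the crux, and it is what forces the $\eps/2$ (rather than, say, $\eps/4$ or $\eps$) threshold in the hypothesis. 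Everything else — the Hellinger identity, Perron–Frobenius submatrix monotonicity, and the final constant-chasing — is routine.
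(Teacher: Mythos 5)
The first half of your plan is essentially the paper's argument, just phrased through a principal submatrix: bounding the interior Bhattacharyya overlap $B:=\frac{1}{\bar\pi(S)}\sum_{i,j\in S}\bar\pi(i)\sqrt{P(i,j)\bar P(i,j)}$ by $(1+O(\eps))\rho$ via a Rayleigh quotient with the all-ones/indicator test vector is exactly what the paper does (it plugs $\mathbf{1}_S$ into the Rayleigh quotient of the full matrix, which is the same number as your submatrix quotient, so the Perron--Frobenius monotonicity step is a harmless detour). Two small repairs there: the matrix $\sqrt{P\circ\bar P}$ is self-adjoint in $L_2(\sqrt{\pi\circ\bar\pi})$, i.e.\ the symmetrizer is $\diag\bigl((\pi\circ\bar\pi)^{1/4}\bigr)$, not $\diag(\bar\pi_S)^{1/2}$ (conjugating by the latter is symmetric only if $P$ were reversible w.r.t.\ $\bar\pi$), and the structural input is this self-adjointness, not Lemma~\ref{lem; 225}; the weight switch from $\sqrt{\pi\circ\bar\pi}$ to $\bar\pi$ is where the $1\pm\eps/2$ factors enter, as you say. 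This part delivers $1-B\geq\Omega(\eps)$, matching the paper.

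The genuine gap is in your treatment of the $\infty$ mass. You write $\dHel^2=1-B-\sqrt{(1-m)(1-m')}$ and propose either $\sqrt{(1-m)(1-m')}\leq\sqrt{1-m}+\text{(small)}$ or the claim that $m'$ ``cannot be much smaller than $m$''. The first fails quantitatively: the spectral argument only yields a gap $1-B\geq\Theta(\eps)$, while $\sqrt{1-m}$ can be as large as $\sqrt{\eps}/4$, so for small $\eps$ the combination $1-B-\sqrt{(1-m)(1-m')}$ can be negative and no $\Omega(\eps^2)$ bound follows; no amount of constant-chasing around the $\eps/16$ slack fixes a $\sqrt{\eps}$-versus-$\eps$ mismatch. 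The second claim is false in general: the hypotheses control $\bar P$'s interior mass on $S$ and the ratio $\pi/\bar\pi$, but nothing prevents the unknown chain $P$ from exiting $S$ with high probability from every state of $S$ while keeping $\pi\approx\bar\pi$, so $m'$ can be close to $0$. This small-$m'$ regime is precisely the case the paper isolates: if $m'\geq 1-5\eps/16$ it drops the $\infty$ coordinate and lower bounds the interior Hellinger sum by $\tfrac12(m+m')-B\geq\Omega(\eps)$; if $m'<1-5\eps/16$ the two distributions differ on the mass of $\infty$ by $\Omega(\eps)$, so $\dTV\geq\Omega(\eps)$ and $\dHel^2\geq\dTV^2/2\geq\eps^2/128$ --- and it is this second case that produces the $\eps^2$ in the statement, which your plan as written does not capture. (A case-free repair is possible, e.g.\ combining $B\leq\sqrt{mm'}$ with the spectral bound, but it still requires an argument specifically for small $m'$ rather than the estimates you propose.)
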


\begin{proof}[Proof sketch]
Let $\bar{Q}=\diag({\bar{\pi}})\bar{P}$ and $R=\diag({\bar{\pi}})P$. From the reversibility of $P,\bar{P}$ it follows that $\sqrt{P \circ \bar{P}}$ is self-adjoint in $L_2\left(\sqrt{\pi \circ \bar{\pi}}\right)$. Applying the Courant–Fischer principle (e.g. \citet[p. 219]{helmberg2008introduction}), taking $u$ to be the characteristic function of $S$ and using the AM-GM inequality we obtain

\begin{align}
\rho&\left(\sqrt{P\circ \bar{P}}\right) = \max_{u\neq 0}\frac{\langle \sqrt{P \circ \bar{P}}u,u\rangle_{\sqrt{\pi\circ \bar{\pi}}}}{||u||_{2,\sqrt{\pi \circ \bar{\pi}}}^2} \nonumber\\ &\geq  \sum_{i,j\in S}\sqrt{\frac{R(i,j)}{\bar{\pi}(S)}} \sqrt{\frac{\bar{Q}(i,j)}{\bar{\pi}(S)}}\sqrt{\frac{\pi(i)}{\bar{\pi}(i)}}\sqrt{\frac{\bar{\pi}(S)}{\pi(S)}}.\nonumber
\end{align}
The assumption on $\frac{\pi}{\bar{\pi}}$ guarantees that $\sqrt{\frac{\pi(i)}{\bar{\pi}(i)}}\sqrt{\frac{\bar{\pi}(S)}{\pi(S)}}$ is not too small. The claim follows by distinguishing between two cases.
\end{proof}


Connectivity (or conductance) measures how a Markov chain navigates among its states and is controlled by the Cheeger constant (cf. \citet[p. 88]{levin2017markov}):

\begin{definition}\label{def; 13}
Let $P\in\Mirrrev$ with stationary distribution $\pi$ and edge measure $Q$. Let $I\subseteq [d]$ and $S\subsetneqq I$. The \emph{bottleneck ratio of $S$ in $I$} is defined by $$\Phi(P, S, I)=\frac{\sum_{i\in S, j\in I\setminus S}Q(i,j)}{\min\{\pi(S), \pi(I\setminus S)\}}$$ and the \emph{Cheeger constant of $P$} is defined by $$\Phi_\star(P)=\min_{S\subsetneqq [d]}\Phi(P, S, [d]).$$
\end{definition}

We shall refer to a component $S\subseteq [d]$ for which the condition in the following corollary is satisfied with some $\alpha>0$ as \emph{well connected}. A well connected component allows to control the spectral gap of the induced censored Markov chain: 

\begin{corollary}\label{cor; 66}
Let $P\in\Mirrrev$ with stationary distribution $\pi$. Let $S\subseteq [d]$ and $\alpha\geq 0$ be such that 
$$\Phi(P, R, S)\geq\alpha,\;\;\forall R\subsetneqq S.$$ 
Then $\gamma\geq\frac{\alpha^2}{2}$ where  $\gamma$ is the spectral gap of $P_{cen}(S)$.
\end{corollary}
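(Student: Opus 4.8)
The plan is to combine the Cheeger inequality for reversible Markov chains with Lemma \ref{lem; 225}, which guarantees that $P_{\textnormal{cen}}(S)$ is again irreducible and reversible, so that the Cheeger inequality applies to it. Concretely, the standard Cheeger inequality (e.g. \citet[Theorem 13.14]{levin2017markov}) states that for an irreducible reversible Markov chain with spectral gap $\gamma$ and Cheeger constant $\Phi_\star$ one has $\gamma \geq \Phi_\star^2/2$. Applied to $P_{\textnormal{cen}}(S)$ this gives $\gamma \geq \Phi_\star(P_{\textnormal{cen}}(S))^2/2$, so it suffices to show $\Phi_\star(P_{\textnormal{cen}}(S)) \geq \alpha$, i.e. that for every proper nonempty $R \subsetneqq S$ the bottleneck ratio of $R$ inside the censored chain is at least the bottleneck ratio of $R$ inside $S$ under $P$, namely $\Phi(P, R, S)$.

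First I would recall the explicit ingredients. By the discussion preceding Lemma \ref{lem; 225}, the stationary distribution of $P_{\textnormal{cen}}(S)$ is $\pi_S(i) = \pi(i)/\pi(S)$ for $i \in S$, so $\pi_S(R) = \pi(R)/\pi(S)$ and $\min\{\pi_S(R), \pi_S(S\setminus R)\} = \min\{\pi(R),\pi(S\setminus R)\}/\pi(S)$. The edge measure of the censored chain is $Q_{\textnormal{cen}}(S)(i,j) = \pi_S(i) P_{\textnormal{cen}}(S)(i,j)$. The key computation is therefore to compare $\sum_{i\in R,\, j\in S\setminus R} \pi(i) P_{\textnormal{cen}}(S)(i,j)$ with $\sum_{i\in R,\, j\in S\setminus R} \pi(i) P(i,j) = \sum_{i\in R,\, j\in S\setminus R} Q(i,j)$.

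The heart of the argument is the following probabilistic identity: the flow of the censored chain across the cut $(R, S\setminus R)$ equals the flow of the original chain across the cut $(R, [d]\setminus R)$. Intuitively, each excursion of $(X_t)$ that leaves $R$ and eventually returns to $S$ either re-enters $S$ directly into $S \setminus R$ (contributing to the censored cut the same way it contributed to the original cut) or it may wander through $[d]\setminus S$, but when it first returns to $S$ it lands somewhere in $S$; grouping excursions by their first return point and using that $P_{\textnormal{cen}}(S)$ records exactly the first return to $S$, one gets
\begin{align}
\sum_{i\in R,\, j\in S\setminus R} \pi(i) P_{\textnormal{cen}}(S)(i,j)
&\;\geq\; \sum_{i\in R,\, j\in [d]\setminus R} \pi(i) P(i,j), \nonumber
\end{align}
with equality when $S = [d]$; the inequality direction (that censoring can only increase the numerator relative to removing the same states, or at worst keep it the same) is what we need. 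More carefully, one writes $P_{\textnormal{cen}}(S) = P_S + \sum_{t\geq 0} P_{S,[d]\setminus S} P^t_{[d]\setminus S} P_{[d]\setminus S, S}$ and splits the sum over the target $j\in S\setminus R$: the $P_S$ term contributes exactly $\sum_{i\in R,\, j\in S\setminus R} Q(i,j)$, and the excursion terms contribute nonnegative mass, so in fact $\sum_{i\in R,\, j\in S\setminus R}\pi(i)P_{\textnormal{cen}}(S)(i,j) \geq \sum_{i\in R,\, j\in S\setminus R} Q(i,j)$. Dividing numerator and denominator by $\pi(S)$, this yields
\begin{align}
\Phi(P_{\textnormal{cen}}(S), R, S)
\;=\; \frac{\sum_{i\in R,\, j\in S\setminus R}\pi(i)P_{\textnormal{cen}}(S)(i,j)/\pi(S)}{\min\{\pi(R),\pi(S\setminus R)\}/\pi(S)}
\;\geq\; \frac{\sum_{i\in R,\, j\in S\setminus R} Q(i,j)}{\min\{\pi(R),\pi(S\setminus R)\}}
\;=\; \Phi(P, R, S) \;\geq\; \alpha. \nonumber
\end{align}
Taking the minimum over $R \subsetneqq S$ gives $\Phi_\star(P_{\textnormal{cen}}(S)) \geq \alpha$, and then Cheeger gives $\gamma \geq \alpha^2/2$.

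The main obstacle I anticipate is making the excursion/censoring argument for the numerator fully rigorous — i.e. justifying that the excursion terms $P_{S,[d]\setminus S}P^t_{[d]\setminus S}P_{[d]\setminus S,S}$ contribute nonnegative (entry-wise nonnegative) mass and that the $P_S$ block alone already accounts for $\sum_{i\in R, j\in S\setminus R} Q(i,j)$. This is in fact immediate from nonnegativity of all matrix entries once the series representation of $P_{\textnormal{cen}}(S)$ is in hand, so the proof reduces to: invoke Lemma \ref{lem; 225} for reversibility of $P_{\textnormal{cen}}(S)$; use the known stationary distribution $\pi_S$; bound the censored bottleneck ratio from below by $\Phi(P,R,S)$ via the series expansion; and conclude with the Cheeger inequality. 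A short version simply says: $P_{\textnormal{cen}}(S)$ is irreducible and reversible by Lemma \ref{lem; 225}, its Cheeger constant is at least $\alpha$ by the flow comparison above, hence by Cheeger's inequality its spectral gap $\gamma$ satisfies $\gamma \geq \alpha^2/2$. \QED
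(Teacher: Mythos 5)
Your proof is correct and takes essentially the same route as the paper: the entry-wise bound $P_{\textnormal{cen}}(S)(i,j)\geq P(i,j)$ for $i,j\in S$ (from the nonnegativity of the excursion terms in the series expansion), the stationary distribution $\pi_S(i)=\pi(i)/\pi(S)$, and then Cheeger's inequality applied to the censored chain, whose reversibility is supplied by Lemma~\ref{lem; 225}. The only caveat is your heuristic aside claiming the censored flow across $(R,S\setminus R)$ dominates the original flow across $(R,[d]\setminus R)$ --- this is not true in general (excursions through $[d]\setminus S$ may return to $R$) --- but your actual chain of inequalities never uses it, relying only on the correct comparison with $\sum_{i\in R,\,j\in S\setminus R}Q(i,j)$, exactly as in the paper.
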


\begin{proof}
Let $Q$ be the edge measure of $P$ and suppose $R\subsetneqq S$ is such that $\pi(R)\leq\frac{\pi(S)}{2}$. Since $P_{\textnormal{cen}}(S)(i,j)\geq P(i,j)$ for every $i,j\in S$, we obtain 
\begin{equation*}
    \begin{split}
        \Phi(P_{\textnormal{cen}}(S), R, S) &=\frac{\sum_{i\in R, j\in S\setminus R} \frac{\pi(i)}{\pi(S)}P_{\textnormal{cen}}(S)(i,j)}{\pi(R)/\pi(S)}\\ &\geq \frac{\sum_{i\in R,j\in S\setminus R} Q(i,j)}{\pi(R)} \\ & =\Phi(P, R, S) \geq\alpha.        
    \end{split}
\end{equation*}
Thus, $\Phi_\star(P_{\textnormal{cen}}(S))\geq\alpha$. By Cheeger's inequality
\citep[Lemma 3.3]{sinclair1989approximate}, the spectral gap $\gamma$ of $P_{\textnormal{cen}}(S)$ satisfies
$\gamma \geq \Phi_\star(P_{\textnormal{cen}}(S))^2/2$.

\end{proof}



Recall that the state space partitioning algorithm, in addition to the well connected components, returns a subset $T\subseteq [d]$ comprising of states that belong to no component. The following lemma bounds the largest eigenvalue of the submatrix $P_T$. Its proof is a variation of the proof of \citep[Lemma 3.3]{sinclair1989approximate} and is given in Appendix \ref{app; 2}. The bound is then used in Lemma \ref{lem; 890} to upper bound the time the Markov chain spends in $T$.
The proof of Lemma \ref{lem; 890} is given in Appendix \ref{section:proof-lemma-890}.

\begin{lemma}
\label{lem; 32}
Let $P\in\Mirrrev$ with stationary distribution $\pi$ and edge measure $Q$. Let $T\subsetneqq [d]$ and $\alpha\geq 0$ be such that $$\frac{\sum_{i\in R,j\in[d]\setminus R}Q(i,j)}{\pi(R)}\geq\alpha, \;\;\forall R\subseteq T.$$ Let $\lambda$ denote the largest eigenvalue of $P_T$. Then $$\lambda\leq 1-\frac{\alpha^2}{2}.$$ 
\end{lemma}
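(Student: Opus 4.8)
The plan is to run the classical proof of Cheeger's inequality, following \citet[Lemma~3.3]{sinclair1989approximate}, with one modification: the sub-stochastic matrix $P_T$ loses mass to $[d]\setminus T$, and this leaked mass has to reappear as an extra boundary term in the associated Dirichlet form; all level sets that arise in the argument will lie inside $T$, which is exactly why the hypothesis is only imposed for $R\subseteq T$. First, reversibility of $P$ gives $\pi(i)P(i,j)=\pi(j)P(j,i)$, hence $\pi(i)P_T(i,j)=\pi(j)P_T(j,i)$ for $i,j\in T$, so $P_T$ is self-adjoint on $L_2(\pi|_T)$ and $\lambda$ is a real eigenvalue. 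By the Courant--Fischer characterization of the smallest eigenvalue of the self-adjoint operator $I-P_T$,
$$1-\lambda=\min_{f\neq 0}\frac{\langle (I-P_T)f,f\rangle_{\pi}}{\|f\|_{2,\pi}^{2}},$$
where $\langle\cdot,\cdot\rangle_\pi$ and $\|\cdot\|_{2,\pi}$ denote the inner product and norm of $L_2(\pi|_T)$.

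Next, extend any $f\colon T\to\R$ to $[d]$ by setting $f\equiv 0$ on $[d]\setminus T$. Expanding the numerator, using $\sum_{j\in[d]}Q(i,j)=\pi(i)$ and $Q(i,j)=Q(j,i)$, one verifies the identity
\begin{align}
\langle (I-P_T)f,f\rangle_{\pi}
&=\sum_{i\in T,\,j\notin T}Q(i,j)f(i)^{2}+\frac12\sum_{i,j\in T}Q(i,j)\bigl(f(i)-f(j)\bigr)^{2}\nonumber\\
&=\frac12\sum_{i,j\in[d]}Q(i,j)\bigl(f(i)-f(j)\bigr)^{2}=:\mathcal{E}(f),\nonumber
\end{align}
i.e.\ $\langle(I-P_T)f,f\rangle_\pi$ equals the full Dirichlet form of $P$ evaluated on the zero-extension of $f$. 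Since $\mathcal{E}(f)\ge\mathcal{E}(|f|)$ and $\||f|\|_{2,\pi}=\|f\|_{2,\pi}$, it is enough to prove $\mathcal{E}(g)\ge\frac{\alpha^{2}}{2}\|g\|_{2,\pi}^{2}$ for every $g\ge 0$: the general bound $\langle(I-P_T)f,f\rangle_\pi\ge\frac{\alpha^{2}}{2}\|f\|_{2,\pi}^{2}$ then follows, and with it $1-\lambda\ge\frac{\alpha^{2}}{2}$ through the displayed variational formula.

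For $g\ge 0$, the co-area (layer-cake) identity together with the symmetry of $Q$ gives
$$\sum_{i,j\in[d]}Q(i,j)\bigl|g(i)^{2}-g(j)^{2}\bigr|=2\int_{0}^{\infty}\sum_{i\in L_t,\,j\notin L_t}Q(i,j)\,dt,\qquad L_t:=\{i\in[d]:g(i)^{2}>t\};$$
since $g$ vanishes off $T$, every $L_t$ lies in $T$, so the hypothesis applies level by level and the right-hand side is at least $2\alpha\int_{0}^{\infty}\pi(L_t)\,dt=2\alpha\|g\|_{2,\pi}^{2}$. On the other hand, writing $g(i)^{2}-g(j)^{2}=(g(i)-g(j))(g(i)+g(j))$ and applying the Cauchy--Schwarz inequality with respect to the probability measure $Q$ on $[d]^2$, together with $\sum_{i,j}Q(i,j)(g(i)+g(j))^{2}\le 4\|g\|_{2,\pi}^{2}$ and $\sum_{i,j}Q(i,j)(g(i)-g(j))^{2}=2\mathcal{E}(g)$, we obtain $\sum_{i,j}Q(i,j)|g(i)^{2}-g(j)^{2}|\le 2\sqrt{2\,\mathcal{E}(g)}\,\|g\|_{2,\pi}$. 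Comparing the two bounds yields $\alpha\|g\|_{2,\pi}\le\sqrt{2\,\mathcal{E}(g)}$, i.e.\ $\mathcal{E}(g)\ge\frac{\alpha^{2}}{2}\|g\|_{2,\pi}^{2}$, which completes the argument. (If $\alpha=0$ there is nothing to prove, as $P_T\ge 0$ is sub-stochastic and hence $\lambda\le 1$.)

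The only step that requires real care — and the single deviation from the textbook proof — is the Dirichlet-form identity of the second paragraph: one must correctly isolate the term $\sum_{i\in T,\,j\notin T}Q(i,j)f(i)^{2}$ produced by the mass that $P_T$ loses, and check that zero-extending $f$ converts $\langle(I-P_T)f,f\rangle_\pi$ into exactly the Dirichlet form of $P$ on all of $[d]$. Once that identity is in place, the co-area estimate and the Cauchy--Schwarz step are the standard Cheeger computation, and the hypothesis has been stated precisely so that the relevant level sets remain inside $T$.
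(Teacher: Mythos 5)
Your proof is correct and follows essentially the same route as the paper's: both run the Sinclair-style Cheeger argument on the substochastic block $P_T$, bounding the quadratic form of $I-P_T$ by the full Dirichlet form of $P$ on zero-extended functions, applying Cauchy--Schwarz to $\sum_{i,j}Q(i,j)\left|g(i)^2-g(j)^2\right|$, and then invoking the boundary hypothesis on level sets contained in $T$. The only differences are presentational — you use the variational (Courant--Fischer) characterization over arbitrary test functions together with the continuous layer-cake identity, whereas the paper works directly with the Perron eigenvector of $P_T$ and a discrete summation by parts over initial-segment level sets.
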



\begin{lemma}\label{lem; 890}
Let $P\in\Mirrrev$ with stationary distribution $\pi$ and edge measure $Q$. Let $T\subsetneqq [d]$ and $\alpha> 0$ be such that $$\frac{\sum_{i\in R,j\in [d]\setminus R} Q(i,j)}{\pi(R)}\geq\alpha, \;\;\forall R\subseteq T.$$ Denote $(\pi_T)_\star = \min_{i\in T}\{\pi(i)\}$. Let $\delta\in(0,1)$ and suppose $m\geq\Omega\left( \frac{\log\frac{1}{(\pi_T)_\star}\log\frac{1}{\delta}}{\alpha^2}\right)$. Then $$\PP\left(\sum_{t=1}^m 1\{X_i\notin T\}\geq \Omega\left(\frac{m\alpha^2}{\log \frac{1}{(\pi_T)_\star}}\right)\right)\geq 1-\delta.$$
\end{lemma}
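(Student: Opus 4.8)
The plan is to reduce the claim to a concentration statement for the Markov chain restricted away from $T$, using the eigenvalue bound from Lemma~\ref{lem; 32} together with the concentration inequalities of \citet{paulin2015concentration}. First I would observe that, by Lemma~\ref{lem; 32}, the largest eigenvalue $\lambda$ of the (substochastic) submatrix $P_T$ satisfies $\lambda \le 1 - \alpha^2/2$. The intuition is that whenever the chain enters $T$, the Perron-type decay governed by $\lambda$ forces it to leave quickly, so the fraction of time spent in $T$ is roughly $O(1/\text{``escape time''})$ in expectation, and we want a high-probability version of this.

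The cleanest route is to pass to the censored chain $P_{\textnormal{cen}}([d]\setminus T)$, which by Lemma~\ref{lem; 225} is again irreducible and reversible, and whose stationary distribution is $\pi$ restricted and renormalized to $[d]\setminus T$. The key quantitative input is a lower bound on the spectral gap $\gamma_T$ of $P_{\textnormal{cen}}([d]\setminus T)$ in terms of $\alpha$: the hypothesis $\sum_{i\in R, j\in [d]\setminus R} Q(i,j) / \pi(R) \ge \alpha$ for all $R\subseteq T$ is exactly the bottleneck condition needed to invoke a Cheeger-type argument (as in Corollary~\ref{cor; 66}), giving $\gamma_T \ge \alpha^2/2$ after an appropriate manipulation. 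I would then apply \citet[Theorem 3.4 or the Bernstein-type bound]{paulin2015concentration} to the function $f(x) = \unit\{x \notin T\}$ along the trajectory $(X_t)_{t\in[m]}$ of the original chain (not the censored one): the number of visits to $[d]\setminus T$ concentrates around $m\,\pi([d]\setminus T)$ with fluctuations controlled by the mixing/relaxation time, which is $\tilde{O}(1/\alpha^2)$, and by the burn-in term $\tilde{O}(\tmix)$. Choosing $m \ge \Omega\!\left(\frac{\log\frac{1}{(\pi_T)_\star}\log\frac{1}{\delta}}{\alpha^2}\right)$ ensures both the burn-in is absorbed and the deviation is smaller than half the mean, yielding $\sum_{t=1}^m \unit\{X_t \notin T\} \ge \frac{m}{2}\pi([d]\setminus T)$ with probability $\ge 1-\delta$.

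It then remains to lower bound $\pi([d]\setminus T)$ by $\Omega\!\left(\alpha^2 / \log\frac{1}{(\pi_T)_\star}\right)$. This is where the eigenvalue bound on $P_T$ re-enters: a substochastic irreducible matrix with top eigenvalue $\lambda \le 1-\alpha^2/2$ has row sums that cannot all be too close to $1$, and more precisely the total "leakage" out of $T$ relative to the mass of $T$ is at least of order $\alpha^2$; combined with the fact that $\pi$ puts mass at least $(\pi_T)_\star$ on each state of $T$ (so $|T| \le 1/(\pi_T)_\star$ and hence $\log|T| \le \log\frac{1}{(\pi_T)_\star}$), one extracts $\pi([d]\setminus T) \gtrsim \alpha^2/\log\frac{1}{(\pi_T)_\star}$. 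I expect the main obstacle to be precisely this last step — getting the logarithmic factor right. The naive bound $\pi([d]\setminus T) \ge \pi_\star$ is too weak; one needs to exploit that the stationary flow into $T$ must balance the flow out, and that the relaxation time inside $T$ being $O(1/\alpha^2)$ limits how much stationary mass $\pi$ can hide in $T$. Getting this "entropy/counting versus spectral" tradeoff to produce exactly the claimed $\log\frac{1}{(\pi_T)_\star}$ factor, rather than something like $\log\frac{1}{\pi_\star}$ or $|T|$, is the delicate part; everything else is a routine application of reversible-chain concentration.
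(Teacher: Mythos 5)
There is a genuine gap, and it sits in the step you treat as routine rather than in the step you flag as delicate. Your plan needs a relaxation time of order $1/\alpha^2$ for the chain whose occupation time you want to concentrate: you claim the censored chain $P_{\textnormal{cen}}([d]\setminus T)$ has spectral gap $\geq \alpha^2/2$ by a Cheeger argument, and then apply \citet{paulin2015concentration} to $f(x)=1\{x\notin T\}$ along the original trajectory with fluctuations ``controlled by the mixing/relaxation time, which is $\tilde{O}(1/\alpha^2)$.'' Neither is available. The hypothesis only bounds the bottleneck ratio of subsets $R\subseteq T$; it says nothing about connectivity inside $[d]\setminus T$, which in the intended application is a union of well-connected components joined by arbitrarily weak links. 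Hence the censored chain on $[d]\setminus T$, and a fortiori the full chain $P$, can have an arbitrarily small spectral gap, the empirical frequency of $[d]\setminus T$ need not concentrate around $\pi([d]\setminus T)$ at the claimed rate, and the burn-in from the arbitrary starting distribution $\mu$ is likewise uncontrolled. (Incidentally, the part you single out as the obstacle is easy: taking $R=T$ and using reversibility, $\alpha\pi(T)\leq\sum_{i\in T,j\notin T}Q(i,j)\leq\pi([d]\setminus T)$, so $\pi([d]\setminus T)\geq\alpha/(1+\alpha)$ — but this does not rescue the argument, because the concentration step is the one that fails.)

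The paper's proof avoids any appeal to stationarity or global mixing. It uses only the substochastic bound of Lemma~\ref{lem; 32}, $\lambda(P_T)\leq 1-\alpha^2/2$, and a block argument: split the trajectory into blocks of length $k=\Theta\bigl(\log\frac{1}{(\pi_T)_\star}/\alpha^2\bigr)$ and let $Y_j$ indicate that the chain leaves $T$ during block $j$. If the block starts outside $T$ this is automatic; if it starts at $i\in T$, then the probability of staying in $T$ for the whole block is $e_i^{\mathsf{T}}P_T^{k-1}\mathbf{1}\leq (\pi_T)_\star^{-1/2}\left(1-\frac{\alpha^2}{2}\right)^{k-1}\leq\frac{1}{2}$, by writing this quantity in $L_2(\pi_T)$ (reversibility makes $P_T$ self-adjoint there) and applying Courant--Fischer. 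So conditionally on the past each $Y_j$ has mean at least $1/2$, and an adapted Hoeffding/Chernoff bound for such dependent Bernoulli variables gives $\sum_j Y_j\geq m/(4k)$ with probability $1-\delta$, which is exactly the claimed $\Omega\bigl(m\alpha^2/\log\frac{1}{(\pi_T)_\star}\bigr)$ steps outside $T$. If you want to salvage your route, you would have to either add an assumption on the gap of $P$ (not available in the intended use) or convert your escape intuition into exactly this kind of conditional block estimate.
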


\subsection{Markov chain partitioning algorithm}\label{sec; alg}

In \citet{pmlr-v99-cherapanamjeri19a} an algorithm for partitioning the state space $[d]$ is devised which, upon receiving a reference Markov chain and a tolerance parameter, returns two objects: A set $\mathcal{S}$ of subsets of $[d]$ in which the iid identity tester will be applied and $T\subseteq [d]$ in which the Markov chain does not spend too much time. The properties of these two objects are given in the following theorem. The algorithm (and its analysis) needs to be modified in order to be applicable beyond the symmetric case. In the rest of the section we give only the details whose modification was more involved. The reader is referred to \citet[Appendix A]{pmlr-v99-cherapanamjeri19a} for the complete description and analysis of the algorithm.


\begin{theorem}\label{thm; 99}
Let $P\in\Mirrrev$ with stationary distribution $\pi$ and edge measure $Q$. Let $\beta\in (0,1)$. There exists an algorithm that returns a tuple $(\mathcal{S}, T)$ such that $S\subseteq [d],\forall S\in\mathcal{S}, T\subseteq [d]$ and $\mathcal{S}\cup\{T\}$ is a partition of $[d]$. Furthermore, for every  $S\in\mathcal{S}$ it holds:
\begin{enumerate}
    \item [(1)] $\frac{\sum_{i,j\in S}Q(i,j)}{\pi(S)}\geq 1-\beta$.
    \item [(2)] 
    $\Phi(P,R,S)\geq\Omega\left(\frac{\beta}{\log^2 d}\right),\;\;\forall R\subsetneqq S$.
    \item [(3)] $\frac{\sum_{i\in R,j\in [d]\setminus R}Q(i,j)}{\pi(R)}\geq\Omega\left(\frac{\beta}{\log d}\right),\;\;\forall R\subseteq T$.
\end{enumerate}
\end{theorem}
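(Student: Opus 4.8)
The plan is to follow the skeleton of the partitioning procedure of \citet[Appendix~A]{pmlr-v99-cherapanamjeri19a}, which recursively identifies sparse cuts, but to carry the weighted (non-uniform stationary) quantities through every step. Concretely, the algorithm maintains a working collection of subsets of $[d]$; starting from the singleton collection $\{[d]\}$, it repeatedly picks a set $I$ in the collection that is not yet ``well connected'' — meaning it admits a subset $R\subsetneqq I$ with small bottleneck ratio $\Phi(P,R,I)$ — and splits $I$ along an (approximately) sparsest cut into $R$ and $I\setminus R$. One then sorts the resulting atoms by stationary mass and declares the light atoms (those contributing little $\pi$-mass, in aggregate at most $\beta$) to be collected into $T$, while the remaining heavy atoms form $\mathcal{S}$. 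The key structural facts that need to be re-derived in the weighted setting are: (i) the recursion terminates in $\bigO(\log d)$ levels of depth because each split roughly halves the stationary mass of the pieces involved (this is where the $\log^2 d$ and $\log d$ factors in (2) and (3) come from, one $\log d$ from recursion depth, another from the sparsest-cut approximation guarantee), and (ii) at each split the edge-measure mass $Q$ crossing the cut is small relative to $\pi$ of the smaller side.

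For part (1): once a set $S$ survives in $\mathcal{S}$, it is by construction well connected, so for every $R\subsetneqq S$ we have $\Phi(P,R,S)=\frac{\sum_{i\in R, j\in S\setminus R}Q(i,j)}{\min\{\pi(R),\pi(S\setminus R)\}}\ge\alpha$ for the appropriate $\alpha=\Omega(\beta/\log^2 d)$, which is exactly (2). To get (1) one sums the edge-measure leaving $S$ entirely: $\sum_{i\in S, j\notin S}Q(i,j)$ is at most the total $Q$-mass of all cuts that were ever made to isolate $S$ from the rest of $[d]$ during the recursion. Each such cut had small relative crossing mass, and there are only $\bigO(\log d)$ of them on the path from $[d]$ down to $S$ in the recursion tree; a telescoping/union argument over these $\bigO(\log d)$ cuts, with the per-cut sparsity parameter chosen as $\Theta(\beta/\log d)$, bounds the total escaping mass by $\beta\,\pi(S)$. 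Hence $\frac{\sum_{i,j\in S}Q(i,j)}{\pi(S)}=1-\frac{\sum_{i\in S,j\notin S}Q(i,j)}{\pi(S)}\ge 1-\beta$, using $\sum_{j\in[d]}Q(i,j)=\pi(i)$, which gives (1). For part (3): the set $T$ is the union of atoms of small stationary mass; for any $R\subseteq T$ we must lower bound $\frac{\sum_{i\in R,j\in[d]\setminus R}Q(i,j)}{\pi(R)}$. The point is that $T$'s atoms are each internally the terminal pieces of the recursion, so small subsets of them are either well connected inside their atom (contributing crossing mass within the atom) or leak out of the atom with sparsity controlled by the cut that created it; combined with the fact that all atoms in $T$ together carry little $\pi$-mass, one shows the relative crossing mass out of any $R\subseteq T$ is $\Omega(\beta/\log d)$. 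This is the analogue of the isoperimetric guarantee on the ``leftover'' set in \citet{pmlr-v99-cherapanamjeri19a}.

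The main obstacle I expect is item (3) together with the bookkeeping of the recursion-depth factors in the weighted setting. In the symmetric case all atoms have stationary mass proportional to their cardinality, so ``light atom'' means ``few states'' and the sparsest-cut recursion balances cleanly; in the reversible case, an atom can have tiny cardinality but is still light only if its $\pi$-mass is small, and conversely a single high-$\pi$-mass state forces its own atom. One must therefore argue the recursion on the potential $\Phi^{\text{measure}}$ carefully — ensuring that each split decreases $\pi$ of the relevant piece by a constant factor (so depth stays $\bigO(\log d)$, \emph{not} $\bigO(\log(1/\pi_\star))$) — and verify that the sparsest-cut subroutine's approximation ratio (another $\bigO(\log d)$, or $\bigO(\log^2 d)$ depending on the algorithm used) composes correctly with the depth to yield the stated $\Omega(\beta/\log^2 d)$ in (2) and $\Omega(\beta/\log d)$ in (3). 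Everything else is a direct weighted transcription: the detailed-balance identities $Q(i,j)=Q(j,i)$ (from reversibility) and $\sum_j Q(i,j)=\pi(i)$ replace the symmetric-case identities, and Lemma~\ref{lem; 225} guarantees the censored chains on the atoms remain reversible so that Corollary~\ref{cor; 66} is applicable downstream.

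\begin{proof}
See Appendix (the algorithm and its analysis are obtained from \citet[Appendix~A]{pmlr-v99-cherapanamjeri19a} by replacing cardinalities with stationary masses and symmetric-matrix identities with the detailed-balance identities $Q(i,j)=Q(j,i)$ and $\sum_{j}Q(i,j)=\pi(i)$, using Lemma~\ref{lem; 225} to preserve reversibility under censoring).
\end{proof}
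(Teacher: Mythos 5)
Your high-level route is the same as the paper's: Theorem \ref{thm; 99} is obtained by rerunning the partitioning machinery of \citet[Appendix~A]{pmlr-v99-cherapanamjeri19a} with cardinalities replaced by stationary masses and symmetry replaced by detailed balance. However, your write-up has a genuine gap at exactly the point where the paper does new work. You treat the sparsest-cut subroutine as an off-the-shelf black box (``approximation ratio $O(\log d)$ or $O(\log^2 d)$ depending on the algorithm used''), but in the reversible setting this is precisely what must be re-proved: one needs a polynomial-time $O(\log d)$-approximation for the \emph{weighted} sparsest cut with \emph{component constraints} (the core $T$ must end up entirely on one side), with numerator $\sum Q(i,j)\delta_S(i,j)$ and denominator $\sum\pi(i)\pi(j)\delta_S(i,j)$. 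The paper's Theorem \ref{thm; 11} supplies this by extending the LP-relaxation-plus-Bourgain-embedding argument of \citet[Theorem 4.1]{linial1995geometry}, and two things there are not ``direct transcription'': (i) the argument needs the symmetry of the weights, which in the numerator is exactly reversibility ($Q(i,j)=Q(j,i)$) --- this is where the hypothesis $P\in\Mirrrev$ enters, and your proposal never uses it beyond a passing mention; (ii) the component constraint is respected only because the LP constraints $\delta_{ij}=0$ and $\delta_{ik}=\delta_{jk}$ for $i,j\in T$ force all of $T$ to a single point under the embedding, so the coordinate-threshold rounding keeps $T$ on one side. On top of that one still needs the corollary converting the LP objective (denominator $\pi(S)\pi(I\setminus S)$) into the bottleneck ratio $\Phi(P,S,I)$ (denominator $\min\{\pi(S),\pi(I\setminus S)\}$) up to a factor $2$ via the $\pi(I)$ normalization; without it the $O(\log d)$ guarantee does not transfer to the quantity appearing in items (2)--(3).

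A second, smaller but real, issue is one you flag yourself and then leave unresolved: the charging/depth bookkeeping that yields $\log d$ rather than $\log(1/\pi_\star)$ factors. In the symmetric case the ``each state is on the smaller side at most $\log_2 d$ times'' argument uses cardinality halving; in the weighted case the natural analogue halves $\pi$-mass, and you explicitly note that one must ``ensure'' the depth stays $O(\log d)$ without showing how. Since items (1)--(3) of Theorem \ref{thm; 99} hinge on precisely these two ingredients (the constrained weighted rounding and the charging bound), a proof that defers both to ``a direct weighted transcription'' of \citet{pmlr-v99-cherapanamjeri19a} is not yet a proof; you should state and prove the weighted constrained sparsest-cut guarantee (the analogue of Theorem \ref{thm; 11} and its corollary) and carry out the mass-charging argument explicitly, invoking Lemma \ref{lem; 225} and Corollary \ref{cor; 66} only downstream as you indicate.
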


\begin{definition}
Let $S\subsetneqq[d]$. The \emph{cut metric associated with $S$} is defined by $$\delta_{S}(i,j)=
\begin{cases} 
0 & \text{if } i,j\in S \text{ or } i,j\in [d]\setminus S\\
1 &\text{otherwise}.
\end{cases}$$
\end{definition}


\begin{definition}
Let $P\in\Mirrrev$ with stationary distribution $\pi$ and edge measure $Q$. Let $I\subseteq [d]$ and $T\subsetneqq I$. The \emph{sparsest cut with component constraints (SPCCC)} is defined as $$S^* = \argmin_{T\subseteq S\subsetneqq I}\frac{\sum_{i,j\in I}Q(i,j)\delta_S(i,j)}{\sum_{i,j\in I}\pi(i)\pi(j)\delta_S(i,j)}.$$

The corresponding linear programming relaxation is given by
$$\min\sum_{i,j\in I}Q(i,j)\delta_{ij}\textnormal{ such that}$$
$$\delta_{ii}=0,\;\;\forall i\in I$$ $$\delta_{ij}\leq\delta_{ik}+\delta_{kj}, \;\;\forall i,j,k\in I$$ $$\sum_{i,j\in I}\pi(i)\pi(j)\delta_{ij}=1$$ $$\delta_{ij}\geq 0$$ \begin{equation}\label{eq; 00}\delta_{ij}=0,\;\;\forall i,j\in T\end{equation} $$\delta_{ik}=\delta_{jk}, \;\;\forall i,j\in T, k\in I.$$
\end{definition}

\begin{theorem}\label{thm; 11}
Given an instance of the $\textnormal{SPCCC}$ problem there exists a polynomial time algorithm $\textnormal{FindComp}$ that returns $S'\subsetneqq I$ such that $S'\cap T=\emptyset$ and \begin{equation}\label{eq; 111}
\begin{split}
&\frac{\sum_{i,j\in I}Q(i,j)\delta_{S'}(i,j)}{\sum_{i,j\in I}\pi(i)\pi(j)\delta_{S'}(i,j)} \\& \leq O(\log d) \min_{T\subseteq S\subsetneqq I}\frac{\sum_{i,j\in I}Q(i,j)\delta_S(i,j)}{\sum_{i,j\in I}\pi(i)\pi(j)\delta_S(i,j)}.
\end{split}
\end{equation}
\end{theorem}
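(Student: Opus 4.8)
The plan is to follow the classical Leighton--Rao / Linial--London--Rabinovich approach to sparsest cut via $\ell_1$-embeddings, adapted to respect the extra constraints \eqref{eq; 00} that force the terminal set $T$ to lie entirely on one side of every cut. First I would solve the linear program stated above in polynomial time (it is an LP of polynomial size) to obtain an optimal fractional metric $\delta^\star = (\delta^\star_{ij})_{i,j\in I}$, whose objective value $\mathrm{OPT}_{\mathrm{LP}}$ is a lower bound on $\min_{T\subseteq S\subsetneqq I}\frac{\sum Q\delta_S}{\sum \pi\pi\,\delta_S}$ because every feasible cut $S\supseteq T$ yields the feasible point $\delta_S$. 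The key structural observation is that the extra constraints $\delta_{ik}=\delta_{jk}$ for $i,j\in T$ mean that, in the metric $\delta^\star$, all terminals in $T$ are at mutual distance $0$ and hence can be treated as a single point; so $\delta^\star$ descends to a genuine (pseudo)metric on the quotient set $I^\dagger = (I\setminus T)\cup\{\ast\}$ where $\ast$ represents the collapsed terminal block.

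Next I would invoke Bourgain's embedding theorem: any $n$-point metric embeds into $\ell_1$ with distortion $O(\log n)$, and moreover (the Linial--London--Rabinovich refinement) such an $\ell_1$-embedding can be written as a nonnegative combination of \emph{cut metrics} $\delta_{A}$ with distortion $O(\log n)$, constructed in randomized (hence, by derandomization or by accepting the stated success probability, polynomial) time. Applying this to the quotient metric on $I^\dagger$ produces cut metrics on $I^\dagger$; pulling each back to $I$ gives cuts $S_A\subsetneqq I$ that, by construction of the quotient, never split $T$ — either $\ast\in A$ (so $T\subseteq S_A$) or $\ast\notin A$ (so $T\cap S_A=\emptyset$), and in the latter case we may complement to put $T$ inside, which does not change $\delta_{S_A}$. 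The standard averaging argument then shows that among these $O(\log d)$-distortion cut metrics there is one, call it $S'$, with
\[
\frac{\sum_{i,j\in I}Q(i,j)\delta_{S'}(i,j)}{\sum_{i,j\in I}\pi(i)\pi(j)\delta_{S'}(i,j)} \;\le\; O(\log d)\cdot \frac{\sum_{i,j\in I}Q(i,j)\delta^\star_{ij}}{\sum_{i,j\in I}\pi(i)\pi(j)\delta^\star_{ij}} \;=\; O(\log d)\cdot \mathrm{OPT}_{\mathrm{LP}},
\]
which together with the LP lower bound gives \eqref{eq; 111}. Finally, $S'$ can be taken with $S'\cap T=\emptyset$ by the complementation remark (the cut is symmetric in $S'$ and $I\setminus S'$, and we choose the side not containing $T$; if this makes $S' = \emptyset$ we discard that trivial cut, which contributes nothing to the averaging bound anyway).

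The main obstacle I anticipate is making the embedding step genuinely respect the component constraint while preserving the $O(\log d)$ guarantee: one must verify that collapsing $T$ to a point is legitimate (i.e., that the LP constraints indeed make $\delta^\star$ a pseudometric vanishing on $T\times T$, so that the quotient is well-defined and isometric), and that every cut produced downstream lifts to a cut of $I$ that does not separate $T$. The remaining ingredients — solving the LP, Bourgain's theorem with the cut-decomposition, the region-growing/averaging argument converting a fractional metric to a good integral cut — are standard; the only care needed is bookkeeping the ratio objective (the normalization $\sum \pi(i)\pi(j)\delta_{ij}=1$ versus the cut denominator) and confirming that complementation to place $T$ inside $S'$ leaves both numerator and denominator of \eqref{eq; 111} unchanged since $\delta_{S'}=\delta_{I\setminus S'}$.
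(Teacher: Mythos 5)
Your proposal is correct and follows essentially the same route as the paper: solve the LP relaxation, round via the Bourgain/Linial--London--Rabinovich $\ell_1$-embedding into cut metrics, note that constraint \eqref{eq; 00} forces all terminals of $T$ to embed to a single point so that no produced cut splits $T$, and finally return the side disjoint from $T$, which is harmless because the ratio in \eqref{eq; 111} is invariant under complementation of $S'$. The only point the paper makes explicit that you leave implicit is that the symmetry of the weights $Q(i,j)$ (a consequence of reversibility) and of $\pi(i)\pi(j)$ is what allows the LLR sparsest-cut rounding argument to carry over verbatim to this instance.
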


\begin{proof}
The proof of \citet[Theorem 4.1]{linial1995geometry} literally extends to our case (the symmetry of $Q(i,j)$ due to reversibility and the symmetry of $\pi(i)\pi(j)$ are crucial) and establishes (\ref{eq; 111}) for some $S'\subsetneqq I$. We shall provide only the details necessary to prove the claim regarding the intersection: First, notice that it suffices to prove that either $S'\cap T=\emptyset$ or $\left(I\setminus S'\right)\cap T=\emptyset$ since the left hand-side of (\ref{eq; 111}) is the same for both $S'$ and $I\setminus S'$ and we let the algorithm return the set satisfying the intersection condition. Now, denote by $\delta$ the minimizing metric that is found by the linear program above. By Bourgain’s metric-embedding theorem (e.g. \citet[Corollary 3.4]{linial1995geometry}), there are $x_1,\ldots,x_d\in\R^m$ where $m=O(\log^2 d)$ such that for every $i,j\in I$ it holds $$\Omega\left(\frac{1}{\log d}\right)\delta_{ij}\leq ||x_i - x_j||_1\leq \delta_{ij}.$$ It follows from this, together with the constraint (\ref{eq; 00}), that $x_i=x_j$ for every $i,j\in T$. Recall that by the proof of \citet[Theorem 4.1]{linial1995geometry}, $S'=\{i\in I\;| \;x_i(r)=1\}$ for some $r\in[m]$. Conclude that either $T\subseteq S'$ or $T\subseteq I\setminus S'$.
\end{proof}

\begin{corollary}
In the setting of Theorem \ref{thm; 11}, for $S'$ returned by the $\textnormal{FindComp}$ algorithm it holds $$\Phi(P, S', I)\leq O(\log d)\min_{T\subseteq S\subsetneqq I}\Phi(P, S, I).$$
\end{corollary}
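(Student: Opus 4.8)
The plan is to unwind the definitions so that the corollary becomes a direct translation of Theorem~\ref{thm; 11}. Recall that for any $S$ with $T\subseteq S\subsetneqq I$, the cut metric $\delta_S$ takes value $1$ exactly on pairs $(i,j)$ with $i\in S, j\in I\setminus S$ or vice versa, and $0$ otherwise. Hence
\[
\sum_{i,j\in I}Q(i,j)\delta_S(i,j)=2\sum_{i\in S,\,j\in I\setminus S}Q(i,j)
\]
by the reversibility-induced symmetry of $Q$, and similarly
\[
\sum_{i,j\in I}\pi(i)\pi(j)\delta_S(i,j)=2\,\pi(S)\,\pi(I\setminus S).
\]
So the SPCCC objective at $S$ equals $\dfrac{\sum_{i\in S, j\in I\setminus S}Q(i,j)}{\pi(S)\pi(I\setminus S)}$, the ``cut value'' of $S$ in $I$.

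Next I would relate this cut value to the bottleneck ratio $\Phi(P,S,I)=\dfrac{\sum_{i\in S,j\in I\setminus S}Q(i,j)}{\min\{\pi(S),\pi(I\setminus S)\}}$. These differ only in the denominator: $\pi(S)\pi(I\setminus S)$ versus $\min\{\pi(S),\pi(I\setminus S)\}$. Writing $p=\min\{\pi(S),\pi(I\setminus S)\}$ and noting $\pi(S)+\pi(I\setminus S)=\pi(I)$, we have $p\le \pi(S)\pi(I\setminus S)/p' $ where $p'=\max\{\pi(S),\pi(I\setminus S)\}$, so $p\,\pi(I)\ge \pi(S)\pi(I\setminus S)\ge p\cdot p' \ge p\,\pi(I)/2$ — more simply, $\tfrac{1}{2}\pi(I)\,p \le \pi(S)\pi(I\setminus S)\le \pi(I)\,p$. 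Therefore the cut value and $\Phi(P,S,I)$ agree up to the fixed factor $\pi(I)$ (within a factor of $2$), uniformly over all admissible $S$. This means the minimizer $S^*$ of the SPCCC objective over $\{S: T\subseteq S\subsetneqq I\}$ is, up to a universal constant, also a near-minimizer of $\Phi(P,\cdot,I)$, and conversely the bound in \eqref{eq; 111} for $S'$ transfers: dividing both numerator and denominator of the left and right sides of \eqref{eq; 111} by the common factors and absorbing the factor-of-$2$ slack into the $O(\log d)$, we get
\[
\Phi(P,S',I)\le O(\log d)\min_{T\subseteq S\subsetneqq I}\Phi(P,S,I),
\]
which is exactly the claim (using $S'\cap T=\emptyset$ from Theorem~\ref{thm; 11} so that $S'$ is an admissible competitor on the right, and noting the minimum over $R\subsetneqq S$ in the corollary statement should read as the minimum over $S$ with $T\subseteq S\subsetneqq I$, matching \eqref{eq; 111}).

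I do not expect a serious obstacle here; the only mild subtlety is bookkeeping the constant factors when passing from the product denominator $\pi(S)\pi(I\setminus S)$ to the $\min$ denominator, and making sure the $\pi(I)$ factor cancels between the two sides of the inequality rather than leaking in. One should double-check that the range of $S$ over which the two minima are taken is the same on both sides (it is, namely $T\subseteq S\subsetneqq I$), so no extra loss is incurred. The whole argument is a few lines once the cut-metric identities above are written down.
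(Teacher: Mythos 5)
Your proof is correct and follows essentially the same route as the paper: unwind the cut metric to identify the SPCCC objective with the cut value $\sum_{i\in S,j\in I\setminus S}Q(i,j)/\bigl(\pi(S)\pi(I\setminus S)\bigr)$, observe that this agrees with $\Phi(P,S,I)/\pi(I)$ up to a uniform factor of $2$, and transfer the guarantee of Theorem~\ref{thm; 11}. One small aside: your parenthetical that $S'\cap T=\emptyset$ makes $S'$ an ``admissible competitor on the right'' is both inaccurate (disjointness is not containment of $T$) and unneeded, since the bound in (\ref{eq; 111}) already has $S'$ on the left and the constrained minimum on the right, which is all the transfer argument uses.
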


\begin{proof}
Let $S\subsetneqq I$. It is easy to see that \begin{equation}\label{eq; 344}\frac{\sum_{i,j\in I}Q(i,j)\delta_{S}(i,j)}{\sum_{i,j\in I}\pi(i)\pi(j)\delta_{S}(i,j)}=\frac{\sum_{i\in S,j\in I\setminus S}Q(i,j)}{\pi(S)\pi(I\setminus S)}.\end{equation}
Now, assume that $\frac{\pi(I)}{2}\leq\pi(S)\leq\pi(I)$. Then 
\begin{equation*}
    \begin{split}
      1 \leq \frac{\pi(I) \min\{\pi(S),\pi(I\setminus S)\}}{\pi(S)\pi(I\setminus S)}\leq 2.
    \end{split}
\end{equation*}
By symmetry, this holds also if $\frac{\pi(I)}{2}\leq\pi(I\setminus S)\leq\pi(I)$. Conclude
that \begin{equation}\label{eq; 23}
\begin{split}
\frac{\pi(I)\sum_{i\in S,j\in I\setminus S}Q(i,j)}{2\pi(S)\pi(I\setminus S)}&\leq\Phi(P, S, I)\\&\leq \frac{\pi(I)\sum_{i\in S,j\in I\setminus S}Q(i,j)}{\pi(S)\pi(I\setminus S)}.
\end{split}\end{equation}
The claim follows by combining (\ref{eq; 111}), (\ref{eq; 344}) and (\ref{eq; 23}) together.
\end{proof}

\subsection{Sample generation}\label{sec; 788}

Both \citet{daskalakis2018testing} and \citet{pmlr-v99-cherapanamjeri19a} reduce the Markov chain identity testing to identity testing of  distributions in the iid case and use a black box tester which we will refer to as $\textnormal{iidTester}$ (cf. \citep[Algorithm 1]{daskalakis2018distribution}). The algorithm  accepts the arguments $(Y_t)_{t\in[m]}, \bar{p}, \eps,\delta$ and returns either $0$ or $1$. The guarantees on the tester's performance as well as the meaning of each of its arguments is given in the following lemma, which is used in Lemma \ref{lem; 356}.

\begin{lemma}\label{lem; 54}
Suppose we have the description of a probability distribution $\bar{p}\in\Delta_d$ and let $\eps,\delta\in(0,1)$. Then there is a tester called $\textnormal{iidTester}$ such that if it is given $m\geq \Omega\left(\frac{\sqrt{d}\log\frac{1}{\delta}}{\eps^2}\right)$ iid samples $(Y_t)_{t\in[m]}$ from an unknown probability distribution  $p\in\Delta_d$ the following holds with probability at least $1-\delta$: 
\begin{align}
\textnormal{iidTester}\left((Y_t)_{t\in[m]},\bar{p}, \eps, \delta\right) =
\begin{cases}
0 & \textnormal{if } p=\bar{p} \\
1 & \textnormal{if } \dHel(p,\bar{p})\geq\eps.
\end{cases}\nonumber
\end{align}
\end{lemma}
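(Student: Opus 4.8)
The plan is to obtain the stated tester by amplifying the confidence of the standard iid identity tester with constant success probability; this amplification is essentially the only non-vacuous content of the statement. First I would recall the black box being imported: there is a polynomial-time algorithm --- the iid identity tester of \citet{daskalakis2018distribution} (their Algorithm~1) --- which, on input the description of $\bar p\in\Delta_d$, a proximity parameter $\eps\in(0,1)$, and $m_0=\bigO(\sqrt d/\eps^2)$ iid samples from an unknown $p\in\Delta_d$, outputs $0$ if $p=\bar p$ and $1$ if $\dHel(p,\bar p)\geq\eps$, correctly in either case with probability at least $2/3$. I would not reprove this; it is exactly the tester already used (under the symmetry assumption) by \citet{daskalakis2018testing} and \citet{pmlr-v99-cherapanamjeri19a}. (If \citet{daskalakis2018distribution} already states Algorithm~1 with an explicit dependence on a confidence parameter $\delta$, the lemma is a direct restatement and the amplification step below is unnecessary.) If it is useful to recall why a \emph{Hellinger} gap of $\eps$ --- rather than a total-variation gap --- is the natural hypothesis, one can point to the comparison $\chi^2(p\,\|\,\bar p)\geq 2\,\dHel^2(p,\bar p)$, immediate from $\dHel^2(p,\bar p)=\tfrac12\sum_i(\sqrt{p(i)}-\sqrt{\bar p(i)})^2$ together with $(\sqrt{p(i)}-\sqrt{\bar p(i)})^2\leq (p(i)-\bar p(i))^2/\bar p(i)$: a Hellinger gap of $\eps$ forces a chi-squared separation of $2\eps^2$, which the $\ell_2$/chi-squared statistic underlying the base tester detects with $\bigO(\sqrt d/\eps^2)$ (Poissonized) samples. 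This is also the reason one does not instead reduce to $\ell_1$ testing, since the bound $\dTV\geq\dHel^2$ would cost an extra $\eps^{-2}$ factor.

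Second, the amplification. Given $m\geq\Omega(\sqrt d\log(1/\delta)/\eps^2)$ samples $(Y_t)_{t\in[m]}$, split them into $k=\Theta(\log(1/\delta))$ disjoint batches of size $m_0=\Theta(\sqrt d/\eps^2)$ each, run the base tester independently on each batch, and let $\textnormal{iidTester}$ return the majority vote. Because the batches are disjoint and the $(Y_t)$ are iid, the $k$ runs are mutually independent; in each of the two regimes ($p=\bar p$, resp.\ $\dHel(p,\bar p)\geq\eps$) every run is correct with probability at least $2/3$, so by a Chernoff bound the number of correct runs exceeds $k/2$ except with probability $e^{-\Omega(k)}\leq\delta$, once the constant hidden in $k=\Theta(\log(1/\delta))$ is taken large enough. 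Hence the majority vote realizes the claimed behaviour with probability at least $1-\delta$; the sample budget is $k\,m_0=\bigO(\sqrt d\log(1/\delta)/\eps^2)$, matching the hypothesis on $m$; and the running time is polynomial since the base tester is polynomial-time and is invoked $\bigO(\log(1/\delta))$ times.

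I do not expect a genuine obstacle here. The only two points needing care are (i) that the guarantee must hold \emph{pointwise} for every fixed $p$ in the two regimes --- not merely on average over some prior on $p$ --- which is exactly what the per-instance constant-probability guarantee of the base tester together with the Chernoff amplification delivers; and (ii) invoking the Hellinger/chi-squared version of the base tester rather than an $\ell_1$ tester, as explained above, so that the $\eps$-dependence is $\eps^{-2}$ and not $\eps^{-4}$. Everything else is routine bookkeeping.
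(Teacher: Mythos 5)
Your proposal is correct and matches the paper's treatment: the paper gives no proof of this lemma at all, simply importing it as a black box from \citet[Algorithm 1]{daskalakis2018distribution}, exactly the tester you invoke. Your additional majority-vote amplification to obtain the $\log(1/\delta)$ dependence is standard and sound, and is if anything more explicit than what the paper records.
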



The iid sampling process from the (Markovian) observed trajectory comprises of two stages: First, sample from $[d]$ according to the stationary distribution $\bar{\pi}$ of the reference Markov chain. Second, record the transitions from these states in the trajectory (cf. \citet[pp. 12-13]{daskalakis2018testing}). This process is \citet[Algorithm 1]{pmlr-v99-cherapanamjeri19a} and will be referred to as $\textnormal{iidGenerator}$. Notice that when it is applied on $S\subseteq [d]$, instead of sampling uniformly and iid from $S$, which corresponds to the symmetric case, it needs to sample from $S$ according to the distribution $\bar{\pi}_S$. We elaborate on the iidGenerator  algorithm in Appendix \ref{section:q-sampler}.

Following \citet{daskalakis2018testing},  \citet[Lemma 19]{pmlr-v99-cherapanamjeri19a} use a bound involving the hitting time to guarantee enough visits of each state in a component. 
It is not immediately clear how this approach generalizes to the reversible case. We, instead, exploit the concentration bounds of \citet{paulin2015concentration} that rely on the spectral gap. This comes at no additional cost and arguably simplifies the analysis.

The following two lemmas give guarantees for the success of  this iid sampling process. Their proofs are given in Appendix~\ref{section:proof-paulin} and \ref{522}.

\begin{lemma}\label{lem; 1}
Let $P\in\Mergrev$ with stationary distribution $\pi$ and spectral gap $\gamma$. Let $\mu\in\Delta_d$ and  $\delta\in(0,1)$. Assume $(X_t)_{t\in\N}\sim(P,\mu)$. For $m\in\N$ and $i\in[d]$ denote by $N_m(i) = \sum_{t\in[m]}1\{X_t=i\}$ the number of occurrences of state $i$ in $(X_t)_{t\in[m]}$. Suppose   $m\geq\Omega\left(\frac{\log \frac{1}{\delta\pistar}}{\pistar\gamma}\right)$. Then 
$$\PP\left(N_m(i)\geq \frac{\pi(i)}{2}m, \;\;\forall i\in[d]\right)\geq 1-\delta.$$
\end{lemma}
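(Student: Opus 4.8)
The plan is to handle one state at a time via a one-sided deviation bound for its occupation count and then close with a union bound. Fix $i \in [d]$ and write $N_m(i) = \sum_{t=1}^m \mathbf{1}\{X_t = i\}$ as an additive functional of the bounded test function $f(\cdot) = \mathbf{1}\{\cdot = i\}$, for which $\|f\|_\infty = 1$, the stationary mean of $f$ equals $\pi(i)$, and the stationary variance is $\pi(i)(1-\pi(i)) \le \pi(i)$. Since $(X_t)_{t\in\N} \sim (P,\mu)$ with $P \in \Mergrev$ (ergodic and reversible), the Bernstein/Chernoff-type concentration inequalities of \citet{paulin2015concentration} that are phrased in terms of the spectral gap $\gamma$ apply. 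Accounting for the possibly non-stationary start $\mu$ through the factor $C_\mu := \nrm{\mu/\pi}_{2,\pi}$, and using that for any $\mu \in \Delta_d$ one has $C_\mu \le \pistar^{-1/2}$ (worst case: a point mass), I would obtain a bound of the form
\[
\PR{N_m(i) \le m\pi(i) - t} \;\le\; C_\mu \exp\!\left(-\,c\,\frac{\gamma\, t^2}{m\pi(i) + t}\right)
\]
for a universal constant $c > 0$ and all $t > 0$.

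Next I would specialize $t = m\pi(i)/2$, which turns the left-hand side into exactly $\PR{N_m(i) < \tfrac{\pi(i)}{2}m}$, while $m\pi(i) + t \le 2m\pi(i)$ pushes the exponent down to $-\tfrac{c}{8}\gamma m\pi(i) \le -\tfrac{c}{8}\gamma m\pistar$ since $\pi(i) \ge \pistar$. Thus each state fails with probability at most $\pistar^{-1/2}\exp(-\tfrac{c}{8}\gamma m\pistar)$. Taking a union bound over the at most $\pistar^{-1}$ states gives
\[
\PR{\exists\, i \in [d] :\ N_m(i) < \tfrac{\pi(i)}{2}m} \;\le\; \pistar^{-3/2}\exp\!\left(-\tfrac{c}{8}\gamma m\pistar\right),
\]
and requiring the right-hand side to be at most $\delta$ amounts, after taking logarithms, to $m \ge \tfrac{8}{c\,\gamma\pistar}\bigl(\tfrac{3}{2}\log\tfrac{1}{\pistar} + \log\tfrac{1}{\delta}\bigr) = \Omega\!\bigl(\log(1/(\delta\pistar))/(\pistar\gamma)\bigr)$, which is the claimed sample size.

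I expect the main obstacle to be purely a matter of bookkeeping rather than anything conceptual: one must invoke the precise variant of \citet{paulin2015concentration} that is stated directly in terms of the reversible spectral gap $\gamma = 1 - \lambda_2$ — as opposed to the pseudo-spectral gap or the absolute spectral gap, which can be strictly smaller when $P$ has eigenvalues near $-1$ — and one must correctly propagate the arbitrary initial distribution $\mu$ through the inequality. The key point is that the $\mu$-dependent prefactor is only \emph{polynomial} in $1/\pistar$ (equivalently, one could instead discard a burn-in of $O(\gamma^{-1}\log(1/\pistar))$ steps, which is within the sample budget), so that after taking logarithms in the union bound it is absorbed into the $\log(1/\pistar)$ term and does not change the order of the sample complexity.
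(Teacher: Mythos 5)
Your proposal is correct and follows essentially the same route as the paper: a per-state Bernstein-type bound from \citet{paulin2015concentration} in terms of the spectral gap $\gamma$, the non-stationary start absorbed via the $\nrm{\mu/\pi}_{2,\pi}\leq\mathrm{poly}(1/\pistar)$ prefactor (Paulin's Proposition~3.10 in the paper), replacing $\pi(i)$ by $\pistar$, and a union bound over the states before solving for $m$. The only differences are cosmetic (you use a one-sided deviation and bound the number of states by $1/\pistar$ rather than $d$), which do not change the argument or the resulting order of $m$.
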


\begin{lemma}\label{lem; 522}
Let $m\in\N$ and suppose $\left(Y_t\right)_{t\in[m]}$ are iid sampled according to a distribution $p\in\Delta_d$. Let $v\in\{0,1,\ldots,m\}^d$ be the histogram of $\left(Y_t\right)_{t\in[m]}$ and let $\delta\in(0,1)$. Assume $m\geq \Omega\left(\frac{\log\frac{d}{\delta}}{p_{\star}}\right)$. Then $$\PP\left(v(i)\leq 2mp(i),\;\;\forall i\in[d]\right)\geq 1-\delta.$$
\end{lemma}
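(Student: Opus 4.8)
The plan is to apply a union bound over the $d$ coordinates, controlling each binomial tail with a Chernoff-type estimate. Fix $i \in [d]$ and observe that $v(i) = \sum_{t \in [m]} 1\{Y_t = i\}$ is a sum of $m$ i.i.d. Bernoulli random variables with success probability $p(i)$, so $v(i) \sim \Binomial(m, p(i))$ with $\E{v(i)} = m p(i)$. We want an upper tail bound showing that $v(i)$ is not much larger than twice its mean, specifically that $\PR{v(i) > 2 m p(i)}$ is small once $m$ is large enough relative to $1/p(i)$ (and hence relative to $1/p_\star$).

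First I would invoke the multiplicative Chernoff bound for binomials: for any $\theta > 0$,
\begin{equation*}
\PR{v(i) \geq (1+\theta) m p(i)} \leq \exp\!\left(-\frac{\theta^2}{2+\theta}\, m p(i)\right).
\end{equation*}
Taking $\theta = 1$ gives $\PR{v(i) \geq 2 m p(i)} \leq \exp(-m p(i)/3) \leq \exp(-m p_\star / 3)$, using $p(i) \geq p_\star$. Then a union bound over $i \in [d]$ yields
\begin{equation*}
\PR{\exists\, i \in [d] : v(i) > 2 m p(i)} \leq d \exp(-m p_\star / 3).
\end{equation*}
It remains to check that the hypothesis $m \geq \Omega\!\left(\frac{\log(d/\delta)}{p_\star}\right)$ forces $d \exp(-m p_\star / 3) \leq \delta$: indeed, if $m \geq \frac{3}{p_\star}\log\frac{d}{\delta}$, then $m p_\star / 3 \geq \log(d/\delta)$, so $\exp(-m p_\star/3) \leq \delta/d$ and the union bound is at most $\delta$. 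Taking complements gives the claimed $\PR{v(i) \leq 2 m p(i),\ \forall i \in [d]} \geq 1 - \delta$.

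There is essentially no serious obstacle here; this is a routine concentration argument. The only minor points to be careful about are: (i) making sure the implicit constant hidden in the $\Omega(\cdot)$ is at least $3$ (or whatever constant the chosen Chernoff bound produces), which is harmless since $\Omega$ absorbs it; and (ii) noting that the statement uses a non-strict inequality $v(i) \leq 2 m p(i)$ while the Chernoff bound naturally produces $v(i) \geq 2 m p(i)$ — the two events $\{v(i) > 2mp(i)\}$ and $\{v(i) \geq 2mp(i)\}$ differ by a measure-zero-ish boundary term that only helps, so complementing the bound on $\{v(i) \geq 2mp(i)\}$ already gives the desired inequality. One could alternatively cite a standard reference (e.g. a Bernstein or Chernoff bound for sums of independent indicators) instead of reproving the inequality.
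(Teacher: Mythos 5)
Your proof is correct and takes essentially the same route as the paper's: a union bound over the $d$ states combined with an exponential upper-tail estimate for each binomial count $v(i)$. The only cosmetic difference is that the paper invokes Bernstein's inequality (yielding $d\exp(-mp_\star/4)$) whereas you use the multiplicative Chernoff bound with $\theta=1$ (yielding $d\exp(-mp_\star/3)$); the differing constants are absorbed by the $\Omega(\cdot)$ in the hypothesis.
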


In Lemma \ref{lem; 356} it is assumed that there is access to enough iid samples from $\Distribution(S, P, \bar{\pi})$. The following lemma upper bounds the number of visits to $S$ that guarantees this. Due to the way the identity tester of the Markov chains is defined, such a guarantee is only needed in the case that $P=\bar{P}$. 

\begin{lemma}\label{lem; 222}
Let $P\in\Mergrev$ with stationary distribution $\pi$. Let $\eps\in(0,1)$ and let $(\mathcal{S}, T)$ be the tuple returned by the state space partitioning algorithm (Theorem \ref{thm; 99}) applied on $P$ and $\beta = \eps$. Let $\mu\in\Delta_d$ and suppose $(X_t)_{t\in\N}\sim (P,\mu)$. Then it is possible, with probability at least $\frac{9}{10}$, to generate $\frac{m}{4}$ iid samples from $\Distribution(S, P, \pi)$, for every $S\in\mathcal{S}$ that is visited at least  $m\geq\Omega\left(\frac{\log^4 d\log \frac{d}{(\pi_S)_\star}}{\eps^2(\pi_S)_\star}\right)$ times.
\end{lemma}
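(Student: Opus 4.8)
The plan is to reduce the statement to the concentration guarantees already established for censored chains, applying them to the watched chain $P_{\textnormal{cen}}(S)$. First I would record that, by Lemma~\ref{lem; 225}, $P_{\textnormal{cen}}(S)\in\Mirrrev$; since $P$ is ergodic, so is $P_{\textnormal{cen}}(S)$, and its stationary distribution is $\pi_S$ with $(\pi_S)_\star=(\pi_S)(i_{\min})=\pi(i_{\min})/\pi(S)=(\pi_S)_\star$ in the notation of the statement. By Corollary~\ref{cor; 66} applied with the bound (2) of Theorem~\ref{thm; 99}, namely $\Phi(P,R,S)\geq\Omega(\beta/\log^2 d)=\Omega(\eps/\log^2 d)$ for all $R\subsetneqq S$, the spectral gap $\gamma$ of $P_{\textnormal{cen}}(S)$ satisfies $\gamma\geq\Omega(\eps^2/\log^4 d)$.

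Next I would control the number of visits to each state of $S$ in the watched chain. The observed trajectory restricted to $S$, i.e. $(X_{\tau_t})_t$, is distributed as a trajectory of $P_{\textnormal{cen}}(S)$ started from some distribution on $S$. If $S$ is visited at least $m$ times, we are running the censored chain for $m$ steps, so Lemma~\ref{lem; 1} (applied to $P_{\textnormal{cen}}(S)$, with failure probability $\delta$ a small constant, say $1/20$, divided further if a union bound over $\mathcal{S}$ is needed) gives that, provided
$$m\geq\Omega\!\left(\frac{\log\frac{1}{\delta(\pi_S)_\star}}{(\pi_S)_\star\gamma}\right)=\Omega\!\left(\frac{\log^4 d\,\log\frac{d}{(\pi_S)_\star}}{\eps^2(\pi_S)_\star}\right),$$
every state $i\in S$ is visited at least $\tfrac{(\pi_S)(i)}{2}m$ times with probability $\geq 1-\delta$; this matches the claimed threshold on $m$ (the $\log^4 d$ coming from $1/\gamma$, the $\log\frac{d}{(\pi_S)_\star}$ and $1/(\pi_S)_\star$ from Lemma~\ref{lem; 1} directly).

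From here I would invoke the iidGenerator/$\mc{Q}$-sampling argument of Appendix~\ref{section:q-sampler}: having each state $i\in S$ appear at least $\tfrac{(\pi_S)(i)}{2}m$ times lets one couple the recorded transitions out of those visits with genuine iid draws from $\Distribution(S,P,\pi)$ (whose $i$-marginal is $(\pi_S)(i)$ up to the $\infty$-symbol bookkeeping), because the transitions emitted from distinct visits are conditionally independent given the visit counts. One then needs the matching upper bound on how many iid samples a multinomial with these marginals would require from each cell — this is exactly Lemma~\ref{lem; 522} (the histogram of $\tfrac{m}{4}$ iid samples from $\Distribution(S,P,\pi)$ does not exceed twice the expected count in any cell, with probability $\geq 1-\delta$, once $\tfrac{m}{4}\geq\Omega(\log\frac{d^2}{\delta}/(\pi_S)_\star)$). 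Taking $\delta$ a small constant at each of the two or three applications and combining by a union bound yields the overall success probability $\geq \tfrac{9}{10}$, and the number of produced samples is $\tfrac{m}{4}$. The main obstacle I anticipate is the coupling step: one must argue carefully that the "record the outgoing transition at each scheduled visit" procedure produces bona fide iid samples from $\Distribution(S,P,\pi)$ — the subtlety is that the visits are determined by the trajectory itself, so the independence claim needs the strong Markov property at the successive hitting times of $S$ together with the fact that iidGenerator pre-commits the number of samples it wants from each state via the $\bar\pi_S$-draw, rather than reading it off the trajectory; reconciling the "at least $\tfrac{(\pi_S)(i)}{2}m$ visits" lower bound with the "at most twice expectation" upper bound is what makes the constant $\tfrac14$ (rather than, say, $\tfrac12$) appear.
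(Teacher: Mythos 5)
Your proposal is correct and follows essentially the same route as the paper's proof: bound the spectral gap of $P_{\textnormal{cen}}(S)$ via Theorem~\ref{thm; 99}(2) and Corollary~\ref{cor; 66}, lower-bound the per-state visit counts with Lemma~\ref{lem; 1} applied to the censored chain, upper-bound the first-stage histogram with Lemma~\ref{lem; 522}, and combine by a union bound over the at most $d$ components (the paper takes $\delta=\tfrac{1}{20d}$ at each step). Your remark on how the factor $\tfrac14$ arises from matching the $\tfrac{m}{2}\pi_S(i)$ visit lower bound against twice the expected request count from $\tfrac{m}{4}$ draws is exactly the intended accounting.
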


\begin{proof}
Let $S\in\mathcal{S}$. By Theorem \ref{thm; 99} (2), $$\Phi(P,R,S)\geq\Omega\left(\frac{\eps}{\log^2 d}\right),\;\;\forall R\subsetneqq S.$$ By Corollary \ref{cor; 66}, $ \gamma\geq \Omega\left(\frac{\eps^2}{\log^4 d}\right)$ where $\gamma$ is the spectral gap of $P_{cen}(S)$. Thus, by Lemma \ref{lem; 1} applied on $P_{cen}(S)$ and $\delta=\frac{1}{20d}$, in a trajectory of length $m\geq\Omega\left(\frac{\log^4 d\log\frac{20d^2}{(\pi_S)_\star}}{\eps^2(\pi_S)_\star}\right)$ each state $i\in S$ is visited at least $\frac{\pi(i)}{2\pi(S)}m$ times, with probability at least $1-\frac{1}{20d}$. By Lemma \ref{lem; 522}, applied on $p=\pi_S$ and $\delta=\frac{1}{20d}$, if $m$ satisfies the above inequality, then $$\PP\left(v(i)\leq 2m\frac{\pi(i)}{\pi(S)},\;\;\forall i\in S\right)\geq 1-\frac{1}{20d}.$$ Intersecting the above two events, we conclude that, with probability at least $1-\frac{1}{10d}$, we can to generate $\frac{m}{4}$ iid samples from $\Distribution(S, P, \pi)$. By the union bound, with probability at least $\frac{9}{10}$, this is true for every $S\in\mathcal{S}$ that is visited at least $m$ times. \end{proof}

\subsection{Reduction to the iid case}

The following lemma shows that in order to perform identity testing of Markov chains it suffices to perform identity testing in the iid case over any component in $\mathcal{S}$ that is visited enough times:

\begin{lemma}\label{lem; 356}
Let $P, \bar{P}\in\Mirrrev$ with stationary distributions $\pi,\bar{\pi}$, respectively, and let $\eps\in(0,1)$. Assume that $$\left|\left|\frac{\pi}{\bar{\pi}}-\mathbf{1}\right|\right|_{\infty}\leq\frac{\eps}{2}.$$ Let $(\mathcal{S}, T)$ be the tuple returned by the state space partitioning algorithm (Theorem \ref{thm; 99}) applied on $\bar{P}$ and $\beta = \frac{\eps}{16}$. Let $\mu\in\Delta_d$ and suppose $(X_t)_{t\in\N}\sim (P, \mu)$. Let $S\in\mathcal{S}$ and assume that we have $m\geq \Omega\left(\frac{|S|\log d}{\eps^2}\right)$ iid samples $(Y_t)_{t\in[m]}$ from   $\Distribution(S, P, \bar{\pi})$. Then, with probability at least $\frac{9}{10}$,
\begin{equation*}
    \begin{split}
     \textnormal{iidTester}\Bigg((Y_t)_{t\in[m]},\;&  \Distribution(S, \bar{P}, \bar{\pi}), \frac{\eps^2}{128}, \frac{1}{10d}\Bigg) \\&=
\begin{cases}
0 & \textnormal{if } P=\bar{P} \\
1 & \textnormal{if } \Distance(P,\bar{P})\geq\eps.
\end{cases}
    \end{split}
\end{equation*}
\end{lemma}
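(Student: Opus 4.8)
The plan is to derive both branches of the claimed dichotomy from the single black-box guarantee of Lemma~\ref{lem; 54}, applied to the two distributions
$$p \;:=\; \Distribution(S, P, \bar\pi), \qquad \bar p \;:=\; \Distribution(S, \bar P, \bar\pi),$$
both supported on the finite set $S^2\cup\{\infty\}$, of cardinality $|S|^2+1$. In that application, $(Y_t)_{t\in[m]}$ are the iid draws from the unknown $p$; $\bar p$ is the reference distribution, which we can compute since $\bar P$ (hence $\bar\pi$) is given; the proximity parameter is $\eps^2/128$; and the confidence parameter is $1/(10d)$. Since $\sqrt{|S|^2+1}=O(|S|)$, we check that $m$ exceeds the sample threshold of Lemma~\ref{lem; 54} at these parameters, and since $1-1/(10d)\ge 9/10$, it then remains only to verify that the pair $(p,\bar p)$ lands in one of the two cases the tester separates.

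First I would dispose of the case $P=\bar P$: then $p(i,j)=\bar\pi(i)P(i,j)/\bar\pi(S)=\bar\pi(i)\bar P(i,j)/\bar\pi(S)=\bar p(i,j)$ for all $i,j\in S$ (and likewise the two $\infty$-masses agree), so $p=\bar p$, and Lemma~\ref{lem; 54} returns $0$ with probability at least $9/10$. The substantive case is $\Distance(P,\bar P)\ge\eps$, where I would invoke Lemma~\ref{lem; 537} for this same $S$. Two of its three hypotheses are immediate: $\Distance(P,\bar P)\ge\eps$ is the case assumption, and $\|\pi/\bar\pi-\mathbf 1\|_\infty\le\eps/2$ is a standing assumption here. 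The third, $\sum_{i,j\in S}\Distribution(S,\bar P,\bar\pi)(i,j)\ge 1-\eps/16$, is where the choice of tolerance in the partitioning algorithm enters: since $(\mathcal S,T)$ was produced by running the algorithm of Theorem~\ref{thm; 99} on $\bar P$ with $\beta=\eps/16$, property~(1) of that theorem gives $\sum_{i,j\in S}\bar Q(i,j)/\bar\pi(S)\ge 1-\eps/16$, where $\bar Q=\diag(\bar\pi)\bar P$; and since
$$\sum_{i,j\in S}\Distribution(S,\bar P,\bar\pi)(i,j)=\sum_{i,j\in S}\frac{\bar\pi(i)\bar P(i,j)}{\bar\pi(S)}=\frac{\sum_{i,j\in S}\bar Q(i,j)}{\bar\pi(S)},$$
the third hypothesis follows. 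Lemma~\ref{lem; 537} then yields $\dHel^2(p,\bar p)\ge\eps^2/128$, hence $\dHel(p,\bar p)\ge\eps/\sqrt{128}\ge\eps^2/128$ (using $\eps<1$), so $(p,\bar p)$ is in the ``far apart'' case of the tester, and Lemma~\ref{lem; 54} returns $1$ with probability at least $9/10$.

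I do not expect a genuine obstacle: the analytically hard content — the Hellinger lower bound of Lemma~\ref{lem; 537}, the spectral/conductance machinery behind Theorem~\ref{thm; 99}, and the iid tester of Lemma~\ref{lem; 54} — has already been isolated, so what remains is parameter bookkeeping. The one place requiring care is the third hypothesis of Lemma~\ref{lem; 537}: one must recognize that $\sum_{i,j\in S}\Distribution(S,\bar P,\bar\pi)(i,j)$ coincides with the internal-weight ratio $\sum_{i,j\in S}\bar Q(i,j)/\bar\pi(S)$ that Theorem~\ref{thm; 99}(1) controls, and that calling the partitioning algorithm with $\beta=\eps/16$ is precisely what makes that ratio at least the threshold $1-\eps/16$ demanded by Lemma~\ref{lem; 537}. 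One should also make sure that the proximity parameter handed to the tester does not exceed the Hellinger gap from Lemma~\ref{lem; 537} — which holds since $\eps^2/128\le\eps/\sqrt{128}$ for $\eps\in(0,1)$ — and that $m$ clears the tester's sample threshold at that proximity and at confidence $1/(10d)$, which is what the hypothesis on $m$ is for.
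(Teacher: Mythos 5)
Your proposal is correct and follows essentially the same route as the paper's proof: use Theorem~\ref{thm; 99}(1) with $\beta=\eps/16$ to verify the mass hypothesis of Lemma~\ref{lem; 537}, conclude $\dHel^2\geq\eps^2/128$ in the far case (with equality of the two distributions in the case $P=\bar P$), and finish with the black-box guarantee of Lemma~\ref{lem; 54}. The only cosmetic difference is that the paper secures the $9/10$ confidence via a union bound over the at most $d$ components of $\mathcal{S}$ (which is why $\delta=\tfrac{1}{10d}$ is chosen), whereas you read the statement per fixed $S$ and use $1-\tfrac{1}{10d}\geq\tfrac{9}{10}$ directly; both yield the stated conclusion.
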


\begin{proof}
Let $\bar{Q}$ be the edge measure of $\bar{P}$. By Theorem \ref{thm; 99} (1), \begin{align} \sum_{i,j\in S}\Distribution(S, \bar{P},\bar{\pi})(i,j)=&\frac{\sum_{i,j\in S}\bar{Q}(i,j)}{\pi(S)}\nonumber \\\geq& 1-\frac{\eps}{16}.\nonumber\end{align}
By Lemma \ref{lem; 537}, \begin{align}d^2_\textnormal{Hel}(\Distribution(S, P,\bar{\pi}), \Distribution(S, \bar{P},\bar{\pi}))&\geq \nonumber\\&\frac{\eps^2}{128}\nonumber.\end{align} Since $|\mathcal{S}|\leq d$, the claim follows from a union bound and the guarantees of Lemma \ref{lem; 54}.
\end{proof}

\section{PROOF OF THEOREM \ref{theorem:upperbound-reversible}}
\label{section:proof-main-result}

To avoid periodicity issues, we exploit the fact (Proposition \ref{prop, 112} (\ref{112A})) that the distance between two Markov chains does not change much if the Markov chains are replaced by their respective $\alpha$-lazy versions, as long as $\alpha$ is not too big. Thus, we may assume that $P,\bar{P}$ are ergodic, at the cost of replacing $\eps$ with $\frac{\eps}{2}$. 
Let $(\mathcal{S}, T)$ be the tuple returned by the state space partitioning algorithm (Theorem \ref{thm; 99}) applied on $P$ and $\beta = \eps$.
First assume that $P=\bar{P}$. Let $Q$ be the edge measure of $P$. By Theorem \ref{thm; 99} (3), $$\frac{\sum_{i\in R, j\in  [d]\setminus R}Q(i,j)}{\pi(R)}\geq\Omega\left(\frac{\eps}{\log d}\right),\;\;\forall R\subseteq T.$$ Applying Lemma \ref{lem; 890} with $\alpha=\frac{\eps}{\log d}$ and $\delta=\frac{1}{10}$ we have that if $m\geq\Omega\left(\frac{\log^6d\log\frac{1}{\pistar}\log \frac{d}{\pi_\star}}{\eps^4\pistar}\right)$ then $$\PP\left(\sum_{t=1}^m 1\{X_t\in \bigcup_{S\in\mathcal{S}}S\}\geq\Omega\left( \frac{\log^4d\log \frac{d}{\pi_\star}}{\eps^2\pistar}\right)\right)\geq\frac{9}{10}.$$ It holds $$\sum_{S\in\mathcal{S}}\frac{1}{(\pi_S)_\star} = \sum_{S\in\mathcal{S}}\frac{\pi(S)}{\min_{i\in S}\{\pi(i)\}}\leq\sum_{S\in\mathcal{S}}\frac{\pi(S)}{\pistar}= \frac{1}{\pistar}.$$ Thus, by the strong form of the pigeonhole principle (e.g. \citet[Theorem 2.2.1]{brualdi1977introductory}), at least one $S\in\mathcal{S}$ is visited more than
$\Omega\left(\frac{\log^4d\log \frac{d}{\pi_\star}}{\eps^2(\pi_S)_\star}\right)$ times. By Lemma \ref{lem; 222}, with probability at least $\frac{9}{10}$, we may sample $$\Omega\left(\frac{\log^4d\log \frac{d}{\pi_\star}}{\eps^2(\pi_S)_\star}\right)\geq
\Omega\left(\frac{|S|\log d}{\eps^2}\right)$$ iid samples from $$\Distribution(S, P, \bar{\pi})=\Distribution(S, \bar{P}, \bar{\pi}).$$ 
Applying Lemma \ref{lem; 356} we identify that $P=\bar{P}$ with probability of success at least $\frac{9}{10}$. We conclude that, in this case, the tester succeeds with probability at least $\frac{7}{10}$.

Assume now that $\Distance(P, \bar{P})\geq\eps$. In this case the algorithm fails only if the iid sampling succeeds in at least one component $S\in\mathcal{S}$ but the iid tester fails on $S$. The probability that this happens is upper bounded by $\frac{1}{10}$ (Lemma \ref{lem; 54}).
\qed

\section{SOME PROPERTIES OF THE DISTANCE}

The purpose of this section is to develop intuition for the distance notion considered in this work (Definition~\ref{definition:distance}). This will be done by establishing its behaviour under several natural operations, two of which were inspired by \citet[Open Questions 2 and 3]{daskalakis2018testing}. The proof of the following proposition is given in Appendix \ref{sss} .


\begin{proposition}\label{prop, 112}
Let $P,\bar{P}\in\Mirr$ with stationary distributions $\pi,\bar{\pi}$, time reversals $P^*, \bar{P}^*$ and multiplicative reversibilizations $P^\dagger,\bar{P}^\dagger$, respectively. Let $\eps\in(0,1)$.
\begin{enumerate}[$(i)$]
    \item\label{112A} Assume that $\Distance(P, \bar{P})\geq \eps$. Let $\alpha=\frac{\eps^2}{2 \sqrt{2}}$ and consider the $\alpha$-lazy versions of $P$ and $\bar{P}$:
    $$P'= \alpha I + \left(1 - \alpha \right)P,$$ $$\bar{P}' =\alpha I+\left(1 - \alpha \right)\bar{P},$$ respectively. Then $\Distance(P',\bar{P}')\geq\frac{\eps}{2}$.
    \item\label{112B} It holds
$$\Distance(P^*,\bar{P}^*)=\Distance(P,\bar{P}).$$

\item\label{112C} Assume that $P,\bar{P}$ are reversible and that \begin{equation*} \left|\left|\frac{\pi}{\bar{\pi}}-\mathbf{1}\right|\right|_{\infty}<\eps.\end{equation*} Let $\alpha\in[0,1]$ and denote $P(\alpha)=\alpha P+(1-\alpha)\bar{P}$. Then
\begin{equation*}
    \begin{split}
    \Distance(P,P(\alpha)) &\geq1-\sqrt{\alpha}-2\sqrt{\frac{1-\alpha}{1-\eps}}\\&+2\sqrt{\frac{1-\alpha}{1-\eps}}\Distance(P,\bar{P}).
    \end{split}
\end{equation*}

\item\label{112D} Let $k \in \N$. Then \begin{equation*}1-(1-\Distance(P,\bar{P}))^{k}\geq\Distance(P^{k},\bar{P}^{k}).\end{equation*}

\item \label{112D_}
$$\Distance(P^k,\bar{P}^k)\underset{k\to\infty}{\longrightarrow}d_\textnormal{Hel}^2(\pi,\bar{\pi}).$$

\item\label{112E} It holds  $$\Distance(P^\dagger,\bar{P}^\dagger)\leq 2\Distance(P,\bar{P}).$$

\item\label{112F} There exist irreducible and reversible Markov chains that are arbitrarily close under the distance such that the Hellinger distance between their stationary distributions is bounded away from $0$.
\end{enumerate}
\end{proposition}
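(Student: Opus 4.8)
The plan is to turn every item into a statement about the spectral radius $\rho$ of an entrywise nonnegative matrix, and to push each through with two elementary tools used repeatedly: Perron--Frobenius monotonicity, i.e. $0\le M\le N$ entrywise implies $\rho(M)\le\rho(N)$, together with $\rho(M+cI)=\rho(M)+c$ for $M\ge 0$ and $c\ge 0$; and the observation (already used in the proof sketch of Lemma~\ref{lem; 537}) that when $P,\bar{P}$ are reversible the matrix $\sqrt{P\circ\bar{P}}$ is self-adjoint in $L_2(\sqrt{\pi\circ\bar{\pi}})$, so that $\rho(\sqrt{P\circ\bar{P}})=\|\sqrt{P\circ\bar{P}}\|_{L_2(\sqrt{\pi\circ\bar{\pi}})}$. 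I also use the entrywise Cauchy--Schwarz inequality $\sum_{\ell}\sqrt{a_\ell b_\ell}\le\sqrt{\sum_\ell a_\ell}\sqrt{\sum_\ell b_\ell}$, $\sqrt{x+y}\le\sqrt{x}+\sqrt{y}$, and $\sqrt{xy}\le (x+y)/2$; note that $\rho(\sqrt{P\circ\bar{P}})\le 1$ always, since each row sum of $\sqrt{P\circ\bar{P}}$ is at most $1$ by Cauchy--Schwarz.

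\emph{The routine items (ii), (iv), (v), (vi), (vii).} For (ii), a direct computation from $P^*=\diag(\pi)^{-1}P^{T}\diag(\pi)$ gives $\sqrt{P^*\circ\bar{P}^*}=\diag(\sqrt{\pi\circ\bar{\pi}})^{-1}(\sqrt{P\circ\bar{P}})^{T}\diag(\sqrt{\pi\circ\bar{\pi}})$, a similarity transform of the transpose of $\sqrt{P\circ\bar{P}}$, hence the same spectral radius. For (iv), I prove by induction on $k$ that $(\sqrt{P\circ\bar{P}})^{k}\le\sqrt{P^{k}\circ\bar{P}^{k}}$ entrywise: expand the $(i,j)$ entry of the left side over an intermediate index, bound the first factor by the inductive hypothesis, and apply entrywise Cauchy--Schwarz to recover $\sqrt{P^{k}(i,j)\bar{P}^{k}(i,j)}$; monotonicity of $\rho$ then gives $\rho(\sqrt{P\circ\bar{P}})^{k}\le\rho(\sqrt{P^{k}\circ\bar{P}^{k}})$, which rearranges to (iv). For (v), as $k\to\infty$ (assuming aperiodicity, or after passing to the lazy chains of item (i)) one has $P^{k}(i,j)\to\pi(j)$ and $\bar{P}^{k}(i,j)\to\bar{\pi}(j)$, so $\sqrt{P^{k}\circ\bar{P}^{k}}$ tends entrywise to the rank-one matrix with each row equal to $\sqrt{\pi\circ\bar{\pi}}$, whose only nonzero eigenvalue is $\sum_i\sqrt{\pi(i)\bar{\pi}(i)}=1-\dHel^2(\pi,\bar{\pi})$; continuity of eigenvalues yields $\Distance(P^{k},\bar{P}^{k})\to \dHel^2(\pi,\bar{\pi})$. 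For (vi), expanding $P^{\dagger}(i,j)=\sum_{\ell}\tfrac{\pi(\ell)}{\pi(i)}P(\ell,i)P(\ell,j)$ (and similarly for $\bar{P}^{\dagger}$) and applying entrywise Cauchy--Schwarz gives $\sqrt{P^{\dagger}\circ\bar{P}^{\dagger}}\ge D^{-1}A^{T}D\,A$ entrywise, where $A=\sqrt{P\circ\bar{P}}$ and $D=\diag(\sqrt{\pi\circ\bar{\pi}})$; since self-adjointness of $A$ in $L_2(\sqrt{\pi\circ\bar{\pi}})$ is exactly $D^{-1}A^{T}D=A$, this is $A^{2}$, so $\rho(\sqrt{P^{\dagger}\circ\bar{P}^{\dagger}})\ge\rho(A)^{2}$ and $\Distance(P^{\dagger},\bar{P}^{\dagger})\le 1-\rho(A)^{2}=(1-\rho(A))(1+\rho(A))\le 2\,\Distance(P,\bar{P})$. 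For (vii), fix $d\ge 3$ and irreducible reversible chains $A,B$ on $[d]$ with zero diagonal and $\pi_A\ne\pi_B$ (e.g. $A$ symmetric and $B$ with a prescribed nonuniform stationary distribution), and for $\eta\in(0,1)$ set $P_\eta=(1-\eta)I+\eta A$, $\bar{P}_\eta=(1-\eta)I+\eta B$: these are irreducible, reversible, with $\eta$-independent stationary distributions $\pi_A,\pi_B$, so $\dHel(\pi_A,\pi_B)$ is a fixed positive constant, while $\sqrt{P_\eta\circ\bar{P}_\eta}=(1-\eta)I+\eta\sqrt{A\circ B}$ and hence $\Distance(P_\eta,\bar{P}_\eta)=\eta\,\Distance(A,B)\to 0$.

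\emph{Items (i) and (iii).} These are the substantive ones, because they involve two matrices --- $P$ and either its lazy version or a mixture --- whose natural self-adjoint structures sit in different weighted inner products. For (i), the lazy chains keep the stationary distributions $\pi,\bar{\pi}$, and $\sqrt{P'\circ\bar{P}'}$ agrees with $(1-\alpha)\sqrt{P\circ\bar{P}}$ off the diagonal, while on the diagonal the difference $D(i,i):=\sqrt{(\alpha+(1-\alpha)P(i,i))(\alpha+(1-\alpha)\bar{P}(i,i))}-(1-\alpha)\sqrt{P(i,i)\bar{P}(i,i)}$ is nonnegative and, by a short estimate, bounded by a constant times $\sqrt{\alpha}$; writing $\sqrt{P'\circ\bar{P}'}=(1-\alpha)\sqrt{P\circ\bar{P}}+D$ with $D$ diagonal and applying monotonicity and the scalar-shift identity gives $\rho(\sqrt{P'\circ\bar{P}'})\le(1-\alpha)(1-\eps)+O(\sqrt{\alpha})$, so $\Distance(P',\bar{P}')\ge\eps-O(\sqrt{\alpha})$, with $\alpha=\eps^2/(2\sqrt{2})$ tuned so the right side is at least $\eps/2$. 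For (iii), applying $\sqrt{\alpha x_1+(1-\alpha)x_2}\le\sqrt{\alpha x_1}+\sqrt{(1-\alpha)x_2}$ to $P\circ P(\alpha)=\alpha(P\circ P)+(1-\alpha)(P\circ\bar{P})$, together with $P\circ P=P$, yields $\sqrt{P\circ P(\alpha)}\le\sqrt{\alpha}\,P+\sqrt{1-\alpha}\,\sqrt{P\circ\bar{P}}$ entrywise, so $\rho(\sqrt{P\circ P(\alpha)})\le\rho(\sqrt{\alpha}\,P+\sqrt{1-\alpha}\,\sqrt{P\circ\bar{P}})$; I then bound the latter by the operator norm in $L_2(\sqrt{\pi\circ\bar{\pi}})$, in which the $\sqrt{P\circ\bar{P}}$ term contributes $\sqrt{1-\alpha}\,\rho(\sqrt{P\circ\bar{P}})$ and $\|P\|$ is compared with $\|P\|_{L_2(\pi)}=1$ at a distortion governed by $\|\pi/\bar{\pi}-\mathbf{1}\|_{\infty}<\eps$, which is where the factor $\sqrt{(1-\alpha)/(1-\eps)}$ (and the crude constant $2$) enters.

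\emph{Main obstacle.} Everything except (i) and (iii) is bookkeeping with entrywise inequalities and Perron--Frobenius monotonicity. The real work is purely quantitative and confined to those two items: pinning down the diagonal correction $D(i,i)$ in (i) tightly enough in terms of $\sqrt{\alpha}$, and controlling the distortion incurred in passing between the inner products $L_2(\pi)$ and $L_2(\sqrt{\pi\circ\bar{\pi}})$ in (iii), so that the closed-form bounds in the statement come out with the advertised constants.
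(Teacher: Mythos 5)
Most of your items track the paper's own arguments closely and are correct: (ii) is the same similarity-plus-transposition computation; (iv) is exactly the paper's entrywise induction with Cauchy--Schwarz and monotonicity of $\rho$; (v) is the paper's argument, and you are right that it tacitly needs aperiodicity (the statement is for $\Mirr$; the paper glosses over this, and note that ``passing to lazy chains'' changes $P^k$, so it is not literally a fix); your lazy-chain family in (vii) is a valid and arguably cleaner alternative to the paper's explicit $2\times 2$ example; and (i) is the paper's decomposition (diagonal correction bounded by $\sqrt{2\alpha}$, then Perron--Frobenius monotonicity), except that you defer the constant check --- with $\alpha=\eps^2/(2\sqrt{2})$ one gets $\eps-\sqrt{2\alpha}=(1-2^{-1/4})\eps<\eps/2$, so the advertised $\eps/2$ really needs $\alpha$ of order $\eps^{2}/8$; this slip is already in the paper, but ``tuned so the right side is at least $\eps/2$'' does not discharge it.

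The genuine gap is in (iii). After the (correct) entrywise step $\sqrt{P\circ P(\alpha)}\le\sqrt{\alpha}\,P+\sqrt{1-\alpha}\,\sqrt{P\circ\bar{P}}$, you propose to bound the spectral radius by the operator norm in $L_2\left(\sqrt{\pi\circ\bar{\pi}}\right)$. There $\sqrt{P\circ\bar{P}}$ is self-adjoint, so its term contributes $\sqrt{1-\alpha}\,\rho\left(\sqrt{P\circ\bar{P}}\right)$ with no loss, but the inner-product distortion then lands on $P$: one only gets $\|P\|_{L_2(\sqrt{\pi\circ\bar{\pi}})}\le\kappa$ with $\kappa=\sqrt[4]{(1+\eps)/(1-\eps)}\ge 1$, hence
\begin{equation*}
\Distance(P,P(\alpha))\;\ge\;1-\kappa\sqrt{\alpha}-\sqrt{1-\alpha}\,\bigl(1-\Distance(P,\bar{P})\bigr),
\end{equation*}
which has the $\eps$-dependent factor attached to $\sqrt{\alpha}$ rather than to the $\Distance(P,\bar{P})$ term. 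This neither matches nor implies the stated inequality, whose coefficient on $\sqrt{\alpha}$ is exactly $1$ while the factor $2\sqrt{(1-\alpha)/(1-\eps)}$ multiplies the other term (compare the regime where $1-\Distance(P,\bar{P})$ is small: your bound is strictly weaker there). The paper symmetrizes the other way: conjugate by $D=\diag(\pi)^{1/2}$, so that $D^{-1}PD$ is symmetric with spectral radius exactly $1$, compare $D^{-1}\sqrt{P\circ\bar{P}}D$ entrywise with the symmetric matrix $\sqrt{(D\bar{D})^{-1}(P\circ\bar{P})(D\bar{D})}$ at multiplicative cost $\sqrt[4]{(1+\eps)/(1-\eps)}\le 2/\sqrt{1-\eps}$, and then use subadditivity of the spectral radius for symmetric matrices; that is what places the distortion on the correct term. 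A smaller issue is (vi): your reduction of $D^{-1}A^{T}DA$ to $A^{2}$ invokes self-adjointness of $A=\sqrt{P\circ\bar{P}}$ in $L_2\left(\sqrt{\pi\circ\bar{\pi}}\right)$, which holds under reversibility, but (vi) is asserted for all of $\Mirr$. The fix is immediate and is what the paper does: $D^{-1}A^{T}DA$ is the $L_2\left(\sqrt{\pi\circ\bar{\pi}}\right)$-adjoint of $A$ composed with $A$, a positive self-adjoint operator, so its spectral radius equals $\|A\|^{2}\ge\rho(A)^{2}$ with no self-adjointness of $A$ required.
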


\begin{remark}

Property (\ref{112A}) exhibits the robustness of the distance under transitions to $\alpha$-lazy versions. This allows us to handle arbitrary irreducible Markov chains, although  we invoke the concentration bounds of \citet{paulin2015concentration} which hold only for ergodic Markov chains.

Property (\ref{112C}) addresses \citet[Open Questions 2]{daskalakis2018testing} asking how the distance between two Markov chains changes when one substitutes one of them with a convex combination of both.

Property (\ref{112D}) addresses \citet[Open Questions 3]{daskalakis2018testing} asking how the distance between two Markov chains is related to the distance between the same Markov chains being observed only at intervals of size $k$. 

Property (\ref{112D_}) is related to the previous one and shows that with increasing $k$ it becomes increasingly harder to distinguish between two Markov chains that have the same stationary distribution.

Reversible Markov chains enjoy favourable properties. One way to make an irreducible Markov reversible is by moving to its multiplicative reversibilization. It is therefore natural to ask how the distance behaves under this operation. This is addressed in  (\ref{112E}). 

Finally, property (\ref{112F}) reflects the fact that if two reducible Markov chains share an identical essential communicating  class, the distance between them is $0$ (cf. \citet[Claim 1]{daskalakis2018testing}).

\end{remark}

\section{CONCLUSION}
\label{section:conclusion}

In this work we have replaced the restrictive symmetry assumption made by \citet{daskalakis2018testing} and \citet{pmlr-v99-cherapanamjeri19a} with the more natural one of reversibility and showed that it is possible to perform identity testing in this class under the distance notion between Markov chains introduced by \citet{daskalakis2018testing}, provided that the stationary distributions of the reference and of the unknown Markov chains are not too far from each other under a certain distance notion between probability distributions. In addition, we provided intuition regarding the distance notion between Markov chains by making statements on its behaviour under several natural operations. The next step in our research agenda in identity testing of Markov chains is to investigate the possibility of removing the assumption on the closeness of the stationary distributions of the reference and the unknown chain Markov chains.

\bibliography{bibliography}
\bibliographystyle{plainnat}

\appendix
\section*{Appendix}
\section{PROOF OF LEMMA~\ref{lem; 225}} \label{app; 1}

Suppose $P$ is reversible, i.e. $P$ satisfies the detailed balance equation (e.g. \citet[(1.30)]{levin2017markov}: $$Q(i,j)=Q(j,i),\;\;\forall i,j\in [d].$$ Assume without loss that $S=[k]$ for some $k\in[d]$ and let $i,j\in[k]$. We have
\begin{align}
\pi_S(i)P_{\textnormal{cen}}(S)(i,j)=&\frac{\pi(i)}{\pi(S)}\left(P_S+\sum_{t=1}^{\infty}P_{S,[d]\setminus S}P_{[d]\setminus S]}^tP_{[d]\setminus S,S}\right)(i,j)\nonumber\\=&\frac{1}{\pi(S)}\left(\pi(i)P_{S}(i,j)+\pi(i)\left(\sum_{t=1}^{\infty}P_{S,[d]\setminus S}P_{[d]\setminus S}^tP_{[d]\setminus S,S}\right)(i,j)\right)\nonumber\\=&\frac{1}{\pi(S)}\left(\pi(j)P_{S}(j,i)+\sum_{t=1}^{\infty}\left(\pi(i)P_{S,[d]\setminus S}P_{[d]\setminus S}^tP_{[d]\setminus S,S}\right)(i,j)\right).\label{eq; 54}
\end{align}
Now, let $t\in\N$. We have
\begin{align}
\left(\pi(i)P_{S, [d]\setminus S}P_{[d]\setminus S}^tP_{[d]\setminus S,S}\right)(i,j)=&\sum_{l=k+1}^{d}\sum_{m=k+1}^{d}\pi(i)P(i,l)P^t(l,m)P(m,j)\nonumber\\=&\pi(j)\sum_{l=k+1}^{d}\sum_{m=k+1}^{d}P(l,i)P^t(m,l)P(j,m)\nonumber\\=&\left(\pi(j)P_{S, [d]\setminus S}P_{[d]\setminus S}^tP_{[d]\setminus S,S}\right)(j,i)\nonumber.
\end{align}
Thus,
\begin{align} 
(\ref{eq; 54}) =  & \frac{1}{\pi(S)}\left(\pi(j)P_{S}(j,i)+\sum_{t=1}^{\infty}\left(\pi(j)P_{S,[d]\setminus S}P_{[d]\setminus S}^tP_{[d]\setminus S,S}\right)(j,i)\right)\nonumber\\=& \pi_S(j)P_{\textnormal{cen}}(S)(j,i)\nonumber. 
\end{align}
\qed

\section{PROOF OF LEMMA~\ref{lem; 537}}
\label{section:proof-min-max}
It is easy to see that $\sqrt{P \circ \bar{P}}$ is self-adjoint in $L_2\left(\sqrt{\pi \circ \bar{\pi}}\right)$. Let $\mathbf{1}_S\in\R^d$ be given by $$\mathbf{1}_S(i)=\begin{cases}1&i\in S \\0 & \textnormal{otherwise}, \end{cases} \;\forall i\in[d].$$ Let $\bar{Q}=\diag(\bar{\pi})\bar{P}$ be the edge measure and let $R=\diag(\bar{\pi})P$. Then 
\begin{equation*}
    \begin{split}
        \rho\left(\sqrt{P\circ \bar{P}}\right) &= \max_{u\neq 0}\frac{\langle \sqrt{P \circ \bar{P}}u,u\rangle_{\sqrt{\pi\circ \bar{\pi}}}}{||u||_{2,\sqrt{\pi \circ \bar{\pi}}}^2} \\ &\stackrel{(i)}{\geq} \frac{\langle \sqrt{P \circ \bar{P}}\mathbf{1}_S,\mathbf{1}_S\rangle_{\sqrt{\pi\circ \bar{\pi}}}}{||\mathbf{1}_S||_{2,\sqrt{\pi \circ \bar{\pi}}}^2}\\&= \frac{\sum_{i,j\in S}\sqrt{R(i,j)}\sqrt{\bar{Q}(i,j)}\sqrt{\frac{\pi(i)}{\bar{\pi}(i)}}}{\sum_{i\in S}\sqrt{\pi(i)}\sqrt{\bar{\pi}(i)}}
        \\&=
\frac{\sum_{i,j\in S}\sqrt{\frac{R(i,j)}{\bar{\pi}(S)}} \sqrt{\frac{\bar{Q}(i,j)}{\bar{\pi}(S)}}\sqrt{\frac{\pi(i)}{\bar{\pi}(i)}}}{\sum_{i\in S}\sqrt{\frac{\pi(i)}{\pi(S)}}\sqrt{\frac{\bar{\pi}(i)}{\bar{\pi}(S)}}\sqrt{\frac{\pi(S)}{\bar{\pi}(S)}}} \\
\\&\stackrel{(ii)}{\geq}
 \sum_{i,j\in S}\sqrt{\frac{R(i,j)}{\bar{\pi}(S)}} \sqrt{\frac{\bar{Q}(i,j)}{\bar{\pi}(S)}}\sqrt{\frac{\pi(i)}{\bar{\pi}(i)}} \sqrt{\frac{\bar{\pi}(S)}{\pi(S)}}\\
 &\stackrel{(iii)}{\geq}
 \sum_{i,j\in S}\sqrt{\frac{R(i,j)}{\bar{\pi}(S)}} \sqrt{\frac{\bar{Q}(i,j)}{\bar{\pi}(S)}}\underbrace{\sqrt{1 - \frac{\eps}{2}}\sqrt{1 + \frac{\eps}{2}}}_{\geq 1 - \frac{\eps}{2}},
    \end{split}
\end{equation*}
where $(i)$ is due to the Courant–Fischer principle (e.g. \citet[p. 219]{helmberg2008introduction}), $(ii)$ stems from the AM-GM inequality as follows: $$\sum_{i\in S}\sqrt{\frac{\pi(i)}{\pi(S)}}\sqrt{\frac{\bar{\pi}(i)}{\bar{\pi}(S)}}\leq \frac{1}{2}\sum_{i\in S}\left(\frac{\pi(i)}{\pi(S)}+\frac{\bar{\pi}(i)}{\bar{\pi}(S)}\right)= 1$$
and $(iii)$ follows by definition of $\pi(S)$ and the assumption that $\left|\left|\frac{\pi}{\bar{\pi}}-\mathbf{1}\right|\right|_{\infty}\leq\frac{\eps}{2}$:
$$\pi(S) = \sum_{i \in S} \pi(i) \leq \left(1 + \frac{\eps}{2}\right) \sum_{i \in S} \bar{\pi}(i) = \left(1 + \frac{\eps}{2}\right) \bar{\pi}(S).$$
By assumption, $\textnormal{Distance}(P, \bar{P}) \geq \eps$. Thus,
\begin{equation*}
    \begin{split}
        \eps &\leq 1 - \left(1- \frac{\eps}{2}\right) \sum_{i,j\in S}\sqrt{\frac{R(i,j)}{\bar{\pi}(S)}} \sqrt{\frac{\bar{Q}(i,j)}{\bar{\pi}(S)}}.
    \end{split}
\end{equation*}
It follows
\begin{equation}
\label{eq; 34}
    \frac{\eps}{2} \leq \left(1 - \frac{\eps}{2}\right)\left(1-  \sum_{i,j\in S}\sqrt{\frac{R(i,j)}{\bar{\pi}(S)}} \sqrt{\frac{\bar{Q}(i,j)}{\bar{\pi}(S)}}\right)\leq 1-  \sum_{i,j\in S}\sqrt{\frac{R(i,j)}{\bar{\pi}(S)}} \sqrt{\frac{\bar{Q}(i,j)}{\bar{\pi}(S)}}.
\end{equation}
We distinguish between two cases: First assume that \begin{equation}\label{eq; 77}\sum_{i,j\in S} \textnormal{Distribution}(S, P,\bar{\pi})(i,j)\geq 1-\frac{5\eps}{16}.\end{equation} In this case we have
\begin{align}
d^2_\textnormal{Hel}(\textnormal{Distribution}(S, P,\bar{\pi}), \;& \textnormal{Distribution}(S, \bar{P}, \bar{\pi}))\nonumber\\ \geq&\frac{1}{2}\sum_{i,j\in S} \left(\sqrt{\textnormal{Distribution}(S, P,\bar{\pi})(i,j)}-\sqrt{ \textnormal{Distribution}(S, \bar{P}, \bar{\pi})(i,j)}\right)^2\nonumber\\=& \frac{1}{2}\sum_{i,j\in S} \left(\sqrt{\frac{R(i,j)}{\bar{\pi}(S)}}- \sqrt{\frac{\bar{Q}(i,j)}{\bar{\pi}(S)}}\right)^2\nonumber\\=&\frac{1}{2}\sum_{i,j\in S}\left(\frac{R(i,j)}{\bar{\pi}(S)}+ \frac{\bar{Q}(i,j)}{\bar{\pi}(S)}-2\sqrt{\frac{R(i,j)}{\bar{\pi}(S)}} \sqrt{\frac{\bar{Q}(i,j)}{\bar{\pi}(S)}}\right)\nonumber\\\geq& 1-\frac{3\eps}{16}-\sum_{i,j\in S}\sqrt{\frac{R(i,j)}{\bar{\pi}(S)}}\sqrt{\frac{\bar{Q}(i,j)}{\bar{\pi}(S)}}\nonumber\\\geq&\eps-\frac{3\eps}{16}=\frac{13\eps}{16}\nonumber\geq\frac{\eps^2}{128}
\end{align} where in the second inequality we used (\ref{eq; 77}), (\ref{eq; 34}) and that, by assumption,  $$\sum_{i,j\in S} \textnormal{Distribution}(S, \bar{P},\bar{\pi})(i,j)\geq 1-\frac{\eps}{16}.$$

Consider now the case $$\sum_{i,j\in S} \Distribution(S, P, \bar{\pi})(i,j)\leq  1-\frac{5\eps}{16}.$$ In this case we have \begin{align}
d_{\textnormal{TV}}(\textnormal{Distribution}(S, P, \bar{\pi}),\textnormal{Distribution}(S, \bar{P},\bar{\pi}))\geq&\frac{1}{2} \sum_{i,j\in S}\left(\frac{\bar{Q}(i,j)}{\bar{\pi}(S)}- \frac{R(i,j)}{\bar{\pi}(S)}\right)\nonumber\\\geq &\frac{1}{2}\left(1-\frac{\eps}{16}-1+\frac{5\eps}{16}\right)=\frac{\eps}{16}.\nonumber
\end{align} By (\ref{eq; 46}), $$d^2_\textnormal{Hel}(\textnormal{Distribution}(S, P, \bar{\pi}),\textnormal{Distribution}(S, \bar{P},\bar{\pi}))\geq\frac{\eps^2}{128}.$$
\qed

\section{PROOF OF LEMMA~\ref{lem; 32}}
\label{app; 2}


Denote $m=|T|$ and assume without loss that $T=[m]$.
There exists a non negative left Perron vector $u\in\R^m$ corresponding to $\lambda$ which we may assume to be not descending, i.e., $$u(1)\leq u(2)\leq\cdots\leq u(m).$$ By abuse of notation we denote by $u$ the vector in $\R^d$ obtained from $u$ by extending it with $d-m$ zeros. Now, define $\hat{u}\in\R^d$ by 
$\hat{u}(i)= \frac{u(i)}{\pi(i)}, \forall i\in[d]$. It holds 

\begin{equation}\label{eq; 1}
\langle u(I-P),\hat{u}\rangle=(1-\lambda)\langle u,\hat{u}\rangle.
\end{equation}
The right-hand side of (\ref{eq; 1}) equals $$(1-\lambda)\sum_{i\in[d]}\pi(i)\hat{u}(i)^2$$ while the left-hand side is bounded below by $$\sum_{1\leq i<j\leq d}Q(i,j)\left(\hat{u}(i)-\hat{u}(j)\right)^2.$$ Thus, $$1-\lambda\geq\frac{\sum_{1\leq i<j\leq d}Q(i,j)\left(\hat{u}(i)-\hat{u}(j)\right)^2}{\sum_{i\in[d]}\pi(i)\hat{u}(i)^2}.$$
Now, $$\sum_{1\leq i<j\leq d}Q(i,j)\left(\hat{u}(i)+\hat{u}(j)\right)^2\leq2\sum_{i\in[d]}\pi(i)\hat{u}(i)^2.$$ It follows that $$1-\lambda\geq\frac{1}{2}\left(\frac{\sum_{1\leq i<j\leq d}Q(i,j)\left(\hat{u}(i)^2-\hat{u}(j)^2\right)}{\sum_{i\in[d]}\pi(i)\hat{u}(i)^2}\right)^2.$$ 
Now, for $1\leq k\leq d-1$ let $S_k = [k]$. Then 
\begin{align}
\sum_{1\leq i<j\leq d}Q(i,j)\left(\hat{u}(i)^2-\hat{u}(j)^2\right)=&\sum_{k=1}^m\left(\hat{u}(k)^2-\hat{u}(k+1)^2\right)\sum_{i\in S_k, j\in\bar{S_k}}Q(i,j)\nonumber\\ \geq &\alpha\sum_{k=1}^m\left(\hat{u}(k)^2-\hat{u}(k+1)^2\right)\sum_{l=1}^k\pi(l)\nonumber \\ = & \alpha\sum_{l=1}^m\pi(l)\sum_{k=l}^m\left(\hat{u}(k)^2-\hat{u}(k+1)^2\right)\nonumber\\ = & \alpha\sum_{l=1}^d\pi(l)\hat{u}(l)^2.\nonumber
\end{align}
\qed

\section{PROOF OF LEMMA~\ref{lem; 890}}
\label{section:proof-lemma-890}
Let $\pi_T$ be the vector obtained from $\pi$ by keeping only the entries at indices belonging to $T$ and let $\lambda$ be the largest eigenvalue of $P_T$. By Lemma \ref{lem; 32}, $\lambda\leq 1-\frac{\alpha^2}{2}$. Let $k=\frac{8\log\frac{1}{(\pi_T)_\star}}{\alpha^2}$ and let $j\in[k]$. Define $$Y_j = 1\{\exists i\in [(j-1)k+1,jk]\;|\; X_i\notin T\}.$$ We distinguish between two cases:
\begin{enumerate}
    \item $X_{(j-1)k+1}\notin T$: In this case, $\PP(Y_j=1\;|\;X_1,\ldots,X_{(j-1)k+1})=1$.
\item $X_{(j-1)k+1}=i\in T$: In this case, with $D=\diag(\pi_T)$ where  \begin{align}
\PP(Y_j=0\;|\;X_1,\ldots,X_{(j-1)k+1})=&e_i^TP_T^{k-1}\mathbf{1}\nonumber \\ = &e_i^TD^{-1}DP_T^{k-1}\mathbf{1}\nonumber\\=&\left\langle P_T^{k-1}\mathbf{1},D^{-1}e_i\right\rangle_{\pi_T}\nonumber \\ = &\sqrt{\frac{\pi(T)}{\pi(i)}}\left\langle P_T^{k-1}\frac{1}{\sqrt{\pi(T)}}\mathbf{1},\sqrt{\pi(i)}D^{-1}e_i\right\rangle_{\pi_T}.\label{num; 1}
\end{align}
Notice that $\left|\left|\frac{1}{\sqrt{\pi(T)}}\mathbf{1}\right|\right|_{\pi_T}=\left|\left|\sqrt{\pi(i)}D^{-1}e_i\right|\right|_{\pi_T}=1$.  
By the Courant–Fischer principle (e.g. \citet[p. 219]{helmberg2008introduction}),
$$(\ref{num; 1}) \leq \frac{1}{\sqrt{(\pi_T)_\star}}\left(1-\frac{\alpha^2}{2}\right)^{k-1}\leq\frac{1}{2}$$ where in the first inequality we also used that $\sqrt{\frac{\pi(T)}{\pi(i)}}\leq \frac{1}{\sqrt{(\pi_T)_\star}}$ and in the second the definition of $k$.
\end{enumerate}
Combining the two cases we obtain, $\PP(Y_j=1\;|\;X_1,\ldots,X_{(j-1)k})\geq\frac{1}{2}.$ Thus, 
$$\PP\left(\sum_{i=1}^{m}1\{X_i\notin T\}\geq\frac{m}{4k}\right) \geq\PP\left(\sum_{i=1}^{m/k}Y_i\geq\frac{m}{4k}\right)\geq 1-\delta$$ where in the second inequality we used an adaptation of Hoeffding's inequality stated and proved in the following lemma.
\qed

\begin{lemma}
Let $\alpha\geq 0$ and let $(B_t)_{t\in\N}$ be Bernoulli random variables, not necessarily independent, such that for every $t\in\N$ it holds
$$\min_{b_1, \ldots, b_{t-1}\in\{0,1\}} \E{B_t \;|\; B_1 = b_1, \dots, B_{t-1} = b_{t-1}} \geq \alpha.$$ Then $$\PR{\sum_{t = 1}^{n} B_t \leq \frac{\alpha n}{2}} \leq e^{ -\frac{\alpha^2n}{2}}.$$
\end{lemma}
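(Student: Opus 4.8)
The plan is to prove a Hoeffding-type concentration bound for a sequence of dependent Bernoulli variables whose conditional means are bounded below, using the standard supermartingale / exponential moment method (the Azuma–Hoeffding approach adapted to a one-sided lower bound). The key idea is to compare the process against a sequence of independent Bernoulli$(\alpha)$ variables via a domination argument, or, more directly, to bound the moment generating function of $-\sum_t B_t$ step by step using the tower property.

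First I would fix $\lambda > 0$ (to be optimized at the end) and study $\E{e^{-\lambda \sum_{t=1}^n B_t}}$. Writing $S_n = \sum_{t=1}^n B_t$ and conditioning on $\mathcal{F}_{n-1} = \sigma(B_1, \dots, B_{n-1})$, we have
\begin{equation*}
\E{e^{-\lambda S_n}} = \E{e^{-\lambda S_{n-1}} \, \E{e^{-\lambda B_n} \mid \mathcal{F}_{n-1}}}.
\end{equation*}
Since $B_n \in \{0,1\}$, the inner conditional expectation equals $1 - p_n(1 - e^{-\lambda})$, where $p_n = \E{B_n \mid \mathcal{F}_{n-1}} \geq \alpha$. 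Because $1 - e^{-\lambda} > 0$, the map $p \mapsto 1 - p(1-e^{-\lambda})$ is decreasing, so $\E{e^{-\lambda B_n}\mid \mathcal{F}_{n-1}} \leq 1 - \alpha(1-e^{-\lambda}) \leq e^{-\alpha(1-e^{-\lambda})}$. Iterating this bound over $t = n, n-1, \dots, 1$ yields $\E{e^{-\lambda S_n}} \leq e^{-n\alpha(1-e^{-\lambda})}$.

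Next I would apply Markov's inequality: for any threshold $a$,
\begin{equation*}
\PR{S_n \leq a} = \PR{e^{-\lambda S_n} \geq e^{-\lambda a}} \leq e^{\lambda a} \, \E{e^{-\lambda S_n}} \leq \exp\!\left(\lambda a - n\alpha(1 - e^{-\lambda})\right).
\end{equation*}
Taking $a = \alpha n / 2$, I would then either optimize over $\lambda$ exactly or simply pick a convenient value. Using the elementary inequality $1 - e^{-\lambda} \geq \lambda - \lambda^2/2$ gives an exponent bounded above by $\lambda \alpha n/2 - n\alpha\lambda + n\alpha\lambda^2/2 = -n\alpha\lambda/2 + n\alpha\lambda^2/2$; choosing $\lambda = 1/2$ (or optimizing to $\lambda$ of order $1$) produces an exponent of order $-\alpha n$. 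To hit the exact constant $-\alpha^2 n/2$ claimed in the statement, I would instead follow the classical Chernoff–Hoeffding computation for the lower tail of a Binomial: with $a = \alpha n/2$ one sets $e^{-\lambda} = 1/2$, i.e. $\lambda = \ln 2$, giving exponent $\tfrac{\alpha n}{2}\ln 2 - \tfrac{\alpha n}{2} = \tfrac{\alpha n}{2}(\ln 2 - 1) \approx -0.153\,\alpha n$; since $\alpha \leq 1$ we have $\alpha^2 \leq \alpha$, and one checks $\tfrac{1}{2}(1 - \ln 2) \geq \tfrac{1}{2}\alpha$ fails in general, so a slightly more careful choice of $\lambda$ (or invoking the standard relative-entropy lower-tail bound $\PR{S_n \leq \alpha n/2} \leq e^{-n\alpha\, D(1/2\|1)}$ type estimate, then weakening $D$ to $\alpha/2$) recovers precisely $e^{-\alpha^2 n/2}$ after using $\alpha \leq 1$.

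The main obstacle is purely bookkeeping with constants: the martingale/MGF machinery goes through cleanly for dependent variables because only the \emph{conditional} lower bound $p_t \geq \alpha$ is used, and the decreasing monotonicity in $p$ makes the worst case the independent Bernoulli$(\alpha)$ case. The delicate point is squeezing the generic Chernoff exponent down to the stated clean form $\alpha^2 n/2$; this is where one uses $\alpha \in [0,1]$ to trade a linear-in-$\alpha$ exponent for a quadratic one, at the (harmless) cost of a weaker but sufficient bound. I would present the argument via the exponential supermartingale $M_t = \exp(-\lambda S_t + t\alpha(1-e^{-\lambda}))$, noting $\E{M_t \mid \mathcal{F}_{t-1}} \leq M_{t-1}$, hence $\E{M_n} \leq 1$, and then feed this into Markov's inequality as above.
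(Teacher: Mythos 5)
Your overall strategy---controlling the conditional moment generating function step by step via the tower property and then applying Markov's inequality---is the same as the paper's (the paper phrases it as iterated conditioning rather than an explicit supermartingale, but it is the identical mechanism). The gap is in the final constant-chasing step, and it is a genuine gap, not mere bookkeeping. Once you replace the exact conditional factor $1-p_t(1-e^{-\lambda})$ by $e^{-\alpha(1-e^{-\lambda})}$, the best exponent obtainable from your estimate is $\min_{\lambda>0}\bigl(\tfrac{\lambda\alpha n}{2}-n\alpha(1-e^{-\lambda})\bigr)=-\tfrac{\alpha n}{2}(1-\ln 2)\approx -0.153\,\alpha n$, attained at $\lambda=\ln 2$. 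For $\alpha>1-\ln 2\approx 0.31$ this is strictly weaker than the claimed $-\alpha^2 n/2$, so no ``slightly more careful choice of $\lambda$'' can rescue the stated bound from within your Poissonized MGF estimate; you yourself note the constant does not come out, and the relative-entropy fallback you sketch is not in working order as written (the quantity $D(1/2\,\|\,1)$ is infinite, and the weakening you actually need is Pinsker's inequality applied to the un-Poissonized Chernoff bound).

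Two clean ways to close the gap. First, the paper's route: bound each conditional factor by Hoeffding's lemma, $\E{e^{-\lambda B_t}\mid B_1=b_1,\ldots,B_{t-1}=b_{t-1}}\leq e^{-\lambda\alpha+\lambda^2/8}$ (valid because the conditional mean is at least $\alpha$ and $\lambda>0$), so that Markov's inequality gives $\bigl(e^{-\lambda\alpha/2+\lambda^2/8}\bigr)^{n}$; choosing $\lambda=2\alpha$ yields exactly $e^{-\alpha^2 n/2}$. Second, keep your exact factor $1-\alpha(1-e^{-\lambda})$ instead of bounding it by $e^{-\alpha(1-e^{-\lambda})}$, optimize over $\lambda$ to obtain the binomial lower-tail bound $e^{-nD(\alpha/2\,\|\,\alpha)}$, and finish with Pinsker's inequality $D(p\,\|\,q)\geq 2(p-q)^2$, which gives $D(\alpha/2\,\|\,\alpha)\geq \alpha^2/2$. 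Either modification turns your argument into a complete proof; as written, the stated constant is not reached.
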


\begin{proof}
Let $n\in\N$ and $\lambda>0$. By Markov's inequality, 
$$\PR{\sum_{t = 1}^{n} B_t \leq \frac{\alpha n}{2}} \leq e^{ \frac{\lambda \alpha n}{2}} \E{\prod_{t = 1}^{n} e^{-\lambda B_t}}.$$
Now, 
\begin{equation*}
\begin{split}
    \E{\prod_{t = 1}^{n} e^{-\lambda B_t}} =& \sum_{b_1, \ldots, b_n\in\{0,1\}} \prod_{t = 1}^{n} e^{-\lambda  b_t} \PR{B_1 = b_1, \dots, B_n = b_n} \\
    =& \sum_{b_1\in\{0,1\}} e^{- \lambda b_1} \PR{B_1 = b_1} \sum_{b_2\in\{0,1\}} e^{-\lambda  b_2} \PR{B_2 = b_2 \;|\; B_1 = b_1} \cdots \\
    & \hspace{130pt} \sum_{b_n\in\{0,1\}} e^{-\lambda b_n} \PR{B_n = b_n\; |\;B_1 = b_1, \ldots,  B_{n-1} = b_{n-1}}.
\end{split}
\end{equation*}
For the last term in the above equality it holds $$
\sum_{b_n\in\{0,1\}} e^{-\lambda b_n} \PR{B_n = b_n\; |\;B_1 = b_1, \ldots,  B_{n-1} = b_{n-1}} = \E[B_n\; |\;B_{1} = b_{1}, \ldots, B_{n-1} = b_{n-1}]{e^{-\lambda B_n}} \leq  e^{-\lambda\alpha + \frac{\lambda^2}{8}}$$ where the inequality is due to Hoeffding's lemma (e.g. \citet[Lemma 2.6]{massart1896concentration}). Proceeding inductively, we obtain \begin{equation*}
    \E{\prod_{t = 1}^{n} e^{-\lambda  B_t}} \leq  \left(e^{ -\lambda \alpha +  \frac{\lambda^2}{8} }\right)^n.
\end{equation*}
Thus, $$\PR{\sum_{t = 1}^{n} B_t \leq \frac{\alpha n}{2}} \leq\left(e^{-\lambda\alpha+\frac{\lambda^{2}}{4}}\right)^{n/2}\leq e^{-\frac{\alpha^{2}n}{2}}.$$ where in the second inequality we used that $-\lambda\alpha +  \frac{\lambda^2}{4}$ attains a minimum of $-\alpha^{2}$ at $\lambda=2\alpha$. 

\end{proof}

\section{PROOF OF LEMMA~\ref{lem; 1}}
\label{section:proof-paulin}

Let $m\in\N$. Consider first the stationary case, i.e., $\mu=\pi$.  By \citet[Theorem 3.4]{paulin2015concentration}, $$\PP\left(|N_m(i)-m\pi(i)|\geq \frac{1}{2}\pi(i)\right)\leq 2\exp\left(-\frac{m\gamma\pi(i)}{36}\right).$$ 
Thus, there exists a universal constant $c'$ such that if $m>\frac{c'\log \frac{d}{\delta}}{\pi(i)\gamma}$ then $$\PP\left(N_m(i)\notin\left(\frac{1}{2}m\pi(i),\frac{3}{2}m\pi(i)\right)\right)<\frac{\delta}{d}.$$ Now, in order to accommodate for non-stationary chains,
by \citet[Proposition 3.10]{paulin2015concentration}, we need to replace $\log \frac{d}{\delta}$ with $$\log \frac{d||\mu/\pi||_{2,\pi}}{\delta}\leq \log\frac{d}{\delta\pistar}\leq 2\log\frac{1}{\delta\pistar}.$$ 
Replacing $\pi(i)$ with $\pistar$ and using the union bound proves the assertion.
\qed

\section{PROOF OF LEMMA~\ref{lem; 522}}
\label{522}

It holds 
\begin{align}
\PP\left(\exists i\in[d] \textnormal{ s.t } v(i)>2mp(i)\right)\leq& d\max_{i\in[d]}\PP(v(i)>2mp(i)))\nonumber\\\leq& d\exp\left(-\frac{m^{2}p_\star^{2}}{2\left(mp_\star(1-p_\star)+\frac{mp_\star}{3}\right)}\right)\nonumber\\ \leq& d\exp\left(-\frac{mp_\star}{4}\right)<\delta\nonumber
\end{align}
where the second inequality is due to Bernstein's inequality (e.g. \citet[Theorem 1.2]{dubhashi2009concentration}).
\qed

\section{PROOF OF PROPOSITION~\ref{prop, 112} }\label{sss}

\begin{enumerate} 
\item [(\ref{112A})]
Let $i,j\in[d]$. It holds 
\begin{align}
\left(\sqrt{P'\circ\bar{P}'}\right)(i,j)=&\sqrt{ \left( \left(1 - \alpha \right)P(i,j) + \alpha 1\{i = j\} \right) \left( \left(1 - \alpha \right)\bar{P}(i,j) + \alpha 1\{i = j\} \right) }\nonumber\\=&
 \sqrt{ \left(1 - \alpha \right)^2P(i,j) \bar{P}(i,j) + \alpha 1\{i = j\} \left( (1 - \alpha)P(i,j) + (1 - \alpha)\bar{P}(i,j) + \alpha \right)}\nonumber\\\leq & \sqrt{P(i,j) \bar{P}(i,j)} + \sqrt{2\alpha} 1\{i = j\}\nonumber\\=&\left(\sqrt{P\circ\bar{P}} + \sqrt{2\alpha}I\right)(i,j).\nonumber
\end{align}
Now, $$\rho\left(\sqrt{P'\circ\bar{P}'}\right)\leq\rho\left(\sqrt{P\circ\bar{P}} + \sqrt{2\alpha}I\right)=\rho\left(\sqrt{P^\circ\bar{P}}\right)+\sqrt{2\alpha}$$ where the inequality is due to the monotonicity of the spectral radius (e.g. \citet[Theorem 8.1.18]{horn2012matrix}). Thus, $$\Distance(P',\bar{P}')\geq \Distance(P,\bar{P})-\sqrt{2\alpha}=\frac{\eps}{2}.$$

\item [(\ref{112B})]
Using that the spectral radius is invariant under transposition and matrix similarity, 
\begin{align}
  \rho\left(\sqrt{P^{*}\circ\bar{P}^{*}}\right)=&\rho\left(\sqrt{P^{*}\circ\bar{P}^{*}}^T\right)\nonumber\\=&\rho\left(\sqrt{D_{\pi}PD_{\pi^{-1}}\circ D_{\bar{\pi}}\bar{P}D_{\bar{\pi}^{-1}}}\right)\nonumber\\=& \rho\left(D_{\sqrt{\pi\circ\bar{\pi}}}\sqrt{P\circ\bar{P}}D_{\sqrt{\pi\circ\bar{\pi}}}^{-1}\right)\nonumber\\=&\rho\left(\sqrt{P\circ\bar{P}}\right)\nonumber.
\end{align}

\item [(\ref{112C})]

Let $i,j\in[d]$. Then 
$$
\sqrt{P\circ P(\alpha)}(i,j)=\sqrt{P\circ\left(\alpha P+(1-\alpha)\bar{P}\right)}(i,j) \leq \sqrt{\alpha}P(i,j)+\sqrt{1-\alpha}\sqrt{P\circ \bar{P}}(i,j)$$ where the inequality is due to the subadditivity of the function $x\mapsto\sqrt{x}$.
By \citet[Theorem 8.1.18]{horn2012matrix}, $$\rho\left(\sqrt{P\circ P(\alpha)}\right)\leq\rho\left(\sqrt{\alpha}P+\sqrt{1-\alpha}\sqrt{P\circ \bar{P}}\right).$$
Now, with $D=\diag(\pi)^{\frac{1}{2}}$ and $\bar{D}=\diag(\bar{\pi})^{\frac{1}{2}}$, both $D^{-1}PD$ and $\sqrt{(D\bar{D})^{-1}P\circ \bar{P}(D\bar{D})}$ are symmetric. Furthermore, for every $i,j\in[d]$ it holds \begin{align}
    \left(D^{-1}\sqrt{P\circ\bar{P}}D\right)(i,j)=&\pi(j)^{-\frac{1}{2}}\sqrt{P(i,j)\bar{P}(i,j)}\pi(i)^{\frac{1}{2}}\nonumber\\=&\sqrt{\pi(j)^{-\frac{1}{2}}P(i,j)\pi(i)^{\frac{1}{2}}\bar{\pi}(j)^{-\frac{1}{2}}\bar{P}(i,j)\bar{\pi}(i)^{\frac{1}{2}}\sqrt{\frac{\pi(i)\bar{\pi}(j)}{\bar{\pi}(i)\pi(j)}}}\nonumber\\\leq&\sqrt[^4]{\frac{1+\eps}{1-\eps}}\sqrt{(D\bar{D})^{-1}P\circ \bar{P}(D\bar{D})}(i,j).\nonumber
\end{align}Thus, \begin{align}\rho\left(\sqrt{\alpha}P+\sqrt{1-\alpha}\sqrt{P\circ \bar{P}}\right)=&\rho\left(\sqrt{\alpha}D^{-1}PD +\sqrt{1-\alpha}D^{-1}\sqrt{P\circ \bar{P}}D\right)\nonumber\\\leq&\rho\left(\sqrt{\alpha}D^{-1}PD +\sqrt{1-\alpha}\sqrt[^4]{\frac{1+\eps}{1-\eps}}\sqrt{(D\bar{D})^{-1}P\circ \bar{P}(D\bar{D})}\right)\nonumber\\\leq&\sqrt{\alpha}\rho\left(D^{-1}PD\right) +\sqrt{1-\alpha}\sqrt[^4]{\frac{1+\eps}{1-\eps}}\rho\left(D^{-1}\sqrt{P\circ \bar{P}}D\right)\nonumber\\=&\sqrt{\alpha} +\sqrt{1-\alpha}\sqrt[^4]{\frac{1+\eps}{1-\eps}}\rho\left(\sqrt{P\circ \bar{P}}\right)\nonumber\end{align} where the first inequality is due to the monotonicity of the spectral radius (e.g. \citet[Theorem 8.1.18]{horn2012matrix}) and the second inequality is due to the fact that the spectral radius is subadditive for symmetric matrices (e.g. \citet[Theorem 9.21 together with Exercise 9.2.37]{olver2006applied}). 

\item [(\ref{112D})]

By monotonicity of the spectral radius (e.g. \citet[Corollary 6.15]{zhan2013matrix}), it suffices to show that for each $i,j\in[d]$ it holds $$\sqrt{P\circ \bar{P}}^{k}(i,j)\leq\sqrt{P^{k}\circ \bar{P}^{k}}(i,j).$$ 
We proceed by induction. The claim is obviously true for $k=1$. Suppose it holds for $k\in\N$. We have \begin{align}
\sqrt{P\circ \bar{P}}^{k+1}(i,j)=&\sum_{l=1}^{d}\sqrt{P\circ \bar{P}}(i,l)\sqrt{P\circ \bar{P}}^{k}(l,j)    \nonumber\\ \leq &\sum_{l=1}^{d}\sqrt{P(i,l)P^{k}(l,j)}\sqrt{\bar{P}(i,l)\bar{P}^{k}(l,j)}\nonumber\\\leq & \sqrt{\sum_{l=1}^{d}P(i,l)P^{k}(l,j)}\sqrt{\sum_{l=1}^{d}\bar{P}(i,l)\bar{P}^{k}(l,j)}\nonumber\\=& \sqrt{P_{1}^{k+1}\circ P_{2}^{k+1}}(i,j)\nonumber
\end{align}
where we used the induction hypothesis in the first inequality and Cauchy-Schwarz in the second.

\item [(\ref{112D_})] It holds $$P^k\underset{k\to\infty}{\longrightarrow} \begin{pmatrix}
- & \pi & -\\
- &\vdots & - \\
- & \pi & -
\end{pmatrix}\text{ and } \bar{P}^k\underset{k\to\infty}{\longrightarrow} \begin{pmatrix}
- & \bar{\pi} & -\\
- &\vdots & - \\
- & \bar{\pi} & -
\end{pmatrix}$$ where convergence is under the infinity norm over matrices. Thus, $$\text{Distance}(P^k,\bar{P}^k)\underset{k\to\infty}{\longrightarrow}d_\text{Hel}^2(\pi,\bar{\pi}).$$

\item [(\ref{112E})]

It holds
 \begin{align}
   \sqrt{P^\dagger\circ\bar{P}^\dagger}=&\sqrt{P^*P\circ\bar{P}^*\bar{P}}\nonumber\\ \geq& \sqrt{P^{*}\circ\bar{P}^{*}}\sqrt{P\circ\bar{P}}\nonumber\\=&\sqrt{D_\pi^{-1}P^TD_\pi\circ D_{\bar{\pi}}^{-1}\bar{P}^TD_{\bar{\pi}}}\sqrt{P\circ\bar{P}}\nonumber \\=&D_{\sqrt{\pi\circ\bar{\pi}}}^{-1}\sqrt{P\circ \bar{P}}^TD_{\sqrt{\pi\circ\bar{\pi}}}\sqrt{P\circ\bar{P}}\nonumber \\=&\sqrt{P\circ \bar{P}}^*\sqrt{P\circ\bar{P}}\nonumber \\=&\sqrt{P\circ\bar{P}}^\dagger \nonumber
\end{align}
where the first inequality is due to \citet[Theorem 2.2]{drnovvsek2006inequalities}. It follows that $$\rho\left(\sqrt{P^\dagger\circ\bar{P}^\dagger}\right)\geq\rho\left(\sqrt{P\circ\bar{P}}^\dagger\right)\geq\rho\left(\sqrt{P\circ\bar{P}}\right)^2$$ where the second inequality is due to Weyl (e.g. \citet[Lemma 3.3]{gohberg1978introduction}).

\item [(\ref{112F})] For $\alpha\in(0,1)$ define $$P= \begin{pmatrix}
1-\alpha & \alpha \\
\frac{1}{2} &\frac{1}{2}\\
\end{pmatrix}\text{ and } 
\bar{P}= \begin{pmatrix}
1-\alpha & \alpha \\
\alpha &1-\alpha\\
\end{pmatrix}.$$
Clearly, $P,\bar{P}$ are irreducible and reversible and one verifies easily that
\begin{enumerate}
    \item The stationary distributions of $P, \bar{P}$ are $\pi=\left(\frac{\frac{1}{2}}{\frac{1}{2}+\alpha},\frac{\alpha}{\frac{1}{2}+\alpha}\right), \bar{\pi}=\left(\frac{1}{2},\frac{1}{2}\right)$, respectively.
    \item $1-\sum_{i\in [2]} \sqrt{\pi(i)\bar{\pi}(i)}\underset{\alpha\to 0}{\longrightarrow}1-\frac{1}{\sqrt{2}}>\frac{1}{4}$.
    \item $\Distance(P,\bar{P})\underset{\alpha\to 0}{\longrightarrow} 0.$
\end{enumerate}

\end{enumerate}
\qed

\section{ON THE IIDGENERATOR ALGORITHM}
\label{section:q-sampler}
Let $P\in\Mirr$ with stationary distribution $\pi$ and let $\mu\in\Delta_d$.
Let $(X_t)_{t\in\N}\sim(P,\mu)$. For $i\in [d]$ we define the \emph{first hitting time for $i$} to be
$$\tau^{(i)}_1 = \inf \set{ t \geq 1 : X_t = i}$$ (cf. \citet[p. 11]{levin2017markov})
and for $1<s\in\N$ we define the \emph{$s$th hitting time for $i$} to be
$$\tau^{(i)}_s = \inf \set{ t > \tau^{(i)}_{s-1} : X_t = i}.$$
Following~\citet[pp. 12-13]{daskalakis2018testing}, for $v \in \N^d$ we define the mapping $\Psi_{v}: \{0,1,\ldots,d\}^\infty \to \prod_{i=1}^{d} [d]^{v(i)}$ by $$(X_t)_{t\in\N} \mapsto \left(\left(X_{\tau^{(1)}_{t} + 1}\right)_{t\in[v(1)]}, \left(X_{\tau^{(2)}_{t} + 1}\right)_{t\in[v(2)]},\ldots,\left(X_{\tau^{(d)}_{t} + 1}\right)_{t\in[v(d)]}\right).$$

The map outputs, when given a trajectory sampled from $P$, for each $i\in[d]$, the first state that has been visited immediately after hitting $i$, for each of the first $v(i)$ visits to $i$. It is a consequence of the Markov property that all the coordinates of $\Psi_{v}\left(\left(X\right)_{t\in\N}\right)$
are independent and that for each $i\in[d]$, the $i$th entry of the $d$-tuple consists of a sample that is iid according to the conditional distribution defined by the $i$th row of $P$. That is, for $i \in [d]$ it holds
$$\left(X_{\tau^{(i)}_t + 1}\right)_{t \in [v(i)]} \sim P(i, \cdot)^{\otimes v(i)}.$$ The mapping $\Psi_{v}$ allows us to sample from the edge measure $Q=\diag(\pi)P$ of $P$ as described in the following two-stages procedure: Let $m \in \N$ be the desired size of the sample. First sample from $m$ iid random variables $Z_1, \dots, Z_m \sim \pi^{\otimes m}$
and denote by $v$ the corresponding histogram, i.e., $v(i) = \sum_{t=1}^{m} \pred{Z_t = i}$, for every $i\in[d]$. Second, define $\Phi_{\pi} \colon [d]^\infty \to ([d] \times [d])^m $ by $$
        (X_t)_{t\in\N} \mapsto (Y_k)_{k\in[m]},$$
    
where 
$$Y_k = \left(Z_k, \left(\left(\Psi_v(X_t)_{t\in\N}\right)\left(Z_k\right)\right)\left(1 + \sum_{j = 1}^{k - 1} \pred{Z_j = Z_k}\right) \right),\;\; \forall k\in[m].$$
Then
$(Y_k)_{k\in[m]} \sim Q^{\otimes m}$.
For an infinite trajectory, the mapping $\Phi_{\pi}$ is well-defined almost surely since, by assumption, $P$ is irreducible. However, applied on a finite trajectory $(X_t)_{t=1}^n$ for some $n\in\N$, the well-definedness of $\Phi_{\pi}\left((X_t)_{t=1}^n\right)$ is a random event that strongly depends on $n$ and on the properties of the Markov chain $P$.

Notice that the procedure above may be performed with any probability distribution $\nu\in\Delta_d$ (instead of $\pi$). In this case the sampling is from $\diag(\nu)P$ (instead of $Q$). Now, recall that in Lemma \ref{lem; 356} it is assumed that one has access to $m\in\N$ iid samples from $\Distribution(S, P, \bar{\pi})$ where $\emptyset\neq S\subseteq [d]$, $\bar{\pi}$ is the stationary distribution of the reference Markov chain and $P$ is the transition matrix of the unknown Markov chain. To achieve this, we define $\nu\in\Delta_d$ as follows:
\begin{equation}\label{eq; 1233}\nu(i)=\begin{cases}\frac{\pi(i)}{\pi(S)}& i\in S\\ 0& \text{otherwise},\end{cases}\;\;\forall i\in[d]\end{equation} and apply $\Phi_\nu$ on a trajectory $(X_t)_{t\in\N}$. This gives $(Y_k)_{k\in[m]}$ in which we replace every $Y_k$ whose second coordinate does not belong to $S$ with the symbol $\infty$. The pseudocode for the procedure of sampling from $\Distribution(S, P, \bar{\pi})$ is given in \citet[Algorithm 1]{pmlr-v99-cherapanamjeri19a}. One only needs to replace the line $$``v\ \leftarrow l \text{ samples from Uniform}(T)$$ with $$``v\ \leftarrow l \text{ samples from } \nu"$$ where $\nu$ is defined in (\ref{eq; 1233}).

\section{ALGORITHM FOR MARKOV CHAIN IDENTITY TESTING}
\label{section:algorithm}

\begin{algorithm}[H]
\DontPrintSemicolon
 \SetKwData{Distribution}{Distribution}
 \SetKwFunction{Partition}{Partition}
 \SetKwFunction{iidTester}{iidTester}
 \SetKwFunction{iidGenerator}{iidGenerator}
 \SetKwFunction{StationaryDistribution}{StationaryDistribution}
 \KwIn{$d, \eps, \bar{P}, (X_t)_{t\in[m]}
   $
 }
 \KwOut{\textsc{Accept} $= 0$
or
\textsc{Reject} $= 1$
 }
 $(\mathcal{S}, T) \leftarrow \Partition([d], \Theta(\eps))$

 $\bar{\pi}  \leftarrow \StationaryDistribution(\bar{P})$

 \For{$S\in\mathcal{S}$}{
 
    $\nu \leftarrow [0]_{d}$
 
    \For{$i\in[d]$}{
      \If{$i \in S$}{
        $\nu[i] \leftarrow\bar{\pi}[i]/\sum_{j\in S} \bar{\pi}[j]$}
   }
   
   $(\bar{\pi}_S)_\star \leftarrow \max(\nu) $
   
   $l \leftarrow \tilde{\bigO} \left(1/\eps^2(\bar{\pi}_S)_\star\right)$ 
   
   $Y \leftarrow \iidGenerator((X_t)_{t\in[m]}, S, \nu,l)$ 
   
   \If{$Y \neq \textsc{False}$}{
			 \KwRet $\iidTester(Y, \Distribution(S, \bar{P}, \bar{\pi}), \Theta(\eps^2), \Theta(1/d) )$ 
		}
 }
 \KwRet \textsc{Reject}
\end{algorithm}
\begin{remark}
\begin{enumerate}
\item ``\textsc{Accept}" means that the trajectory $(X_t)_{t\in[m]}$ from the unknown Markov chain $P$ suggests that $P=\bar{P}$ and ``\textsc{Reject}" means that probably $\Distance(P,\bar{P})\geq \eps$.  
\item The $\mathrm{StationaryDistribution}$ algorithm is any algorithm that upon receiving the transition matrix of an irreducible Markov chain returns its stationary distribution.
\item The $\mathrm{Partition}$ algorithm corresponds to  \citet[Algorithms 3 and 4]{pmlr-v99-cherapanamjeri19a} and its necessary modifications are described in Section \ref{sec; alg}.
\item The assignment to $l$ follows Lemma \ref{lem; 222}.
\item The  $\mathrm{iidGenerator}$ algorithm corresponds to  \citet[Algorithm~1]{daskalakis2018distribution} and is described in full detail in Section~\ref{section:q-sampler}.
\item The $\mathrm{iidTester}$ algorithm corresponds to \citep[Algorithm 1]{daskalakis2018distribution}. See also Section \ref{sec; 788} and Lemma \ref{lem; 356}.

\end{enumerate}
\end{remark}

\end{document}